\documentclass[fleqn,12pt]{SelfArx} 

\usepackage[english]{babel} 
\usepackage{subfigure}


\setlength{\columnsep}{0.55cm} 
\setlength{\fboxrule}{0.75pt} 


\definecolor{color1}{RGB}{25,25,112} 
\definecolor{color2}{RGB}{10,0,100} 


\usepackage{hyperref} 
\hypersetup{hidelinks,colorlinks,breaklinks=true,urlcolor=color2,citecolor=color1,linkcolor=color1,bookmarksopen=false,pdftitle={Title},pdfauthor={Author}}


\JournalInfo{$\;$} 
\Archive{January 23rd, 2018} 
\title{Non-polyhedral extensions of the Frank-and-Wolfe theorem}

\PaperTitle{Non-polyhedral extensions of the Frank-and-Wolfe theorem} 

\Authors{Juan Enrique Mart\'inez-Legaz\textsuperscript{1}*, Dominikus Noll\textsuperscript{2}, Wilfredo Sosa\textsuperscript{3}} 
\affiliation{\textsuperscript{1}\textit{Departament d'Economia i d'Hist\`oria Econ\`omica, Universitat Aut\`onoma de Barcelona, Spain}} 
\affiliation{\textsuperscript{2}\textit{Institut de Math\'ematiques de Toulouse, 118 route de Narbonne 31062, Toulouse, France}} 
\affiliation{\textsuperscript{3}\textit{Programa de P\^os-Gradua\c{c}\~ao em Economia, Universidade Cat\'olica de Bras\'ilia, Brazil}}
\affiliation{*\textbf{Corresponding authors}:  JuanEnrique.Martinez.Legaz@uab.cat \& dominikus.noll@math.univ-toulouse.fr} 

\Keywords{Quadratic optimization --- asymptotes --- Motzkin-sets --- Frank-and-Wolfe theorem 
\vspace*{.1cm} \\\noindent{\bf AMS 2010 Subject Classification:} 49M20, 65K10, 90C30
} 

 \Abstract{In 1956 Marguerite Frank and Paul Wolfe proved that a quadratic function which is bounded below on a polyhedron 
$P$ attains its infimum on $P$. In this work we search for larger classes of sets $F$ with this Frank-and-Wolfe property. 
We establish the existence of non-polyhedral Frank-and-Wolfe sets, obtain internal characterizations by way of asymptotic 
properties, and investigate stability of the Frank-and-Wolfe class under various operations.}




%
%
\usepackage{algorithmic}
\usepackage[justification=centering]{caption}

\hyphenation{op-tical net-works semi-conduc-tor}

\usepackage{amsmath, amsthm, amssymb}
\usepackage{multirow, array}
\usepackage{float}

\usepackage{mathrsfs}

\usepackage{pst-plot}
 \usepackage{pdftricks}
 \begin{psinputs}
 \usepackage{pstricks,pst-plot}
\end{psinputs}

\theoremstyle{definition}
\newtheorem{definition}{Definition}
\newtheorem{remark}{Remark}
\newtheorem{lemma}{\bf Lemma}
\newtheorem{proposition}{\bf Proposition}
\newtheorem{theorem}{\bf Theorem}
\newtheorem{corollary}{\bf Corollary}
\newtheorem{example}{\bf Example}{\rm}

\usepackage{algorithm}

\makeatletter
\renewcommand{\p@algorithm}{\arabic{algorithm}\expandafter\@gobble}
\makeatother

\newcounter{step}[algorithm]
\setcounter{step}{0}
\newcommand\STEP[2][\(\triangleright\)]{%
	\refstepcounter{step}
	\vskip 0.25\baselineskip
	\item[]\hskip -\algorithmicindent #1 \textbf{Step \arabic{step}}%
	\ifthenelse{\equal{\unexpanded{#2}}{}}{}{ (\texttt{#2})}%
	\textbf{.}%
}
 %

\def\algo#1\end{%
	\noindent\fbox{%
	\begin{minipage}[b]{\dimexpr\columnwidth-\algorithmicindent\relax}
	\begin{algorithmic}
	#1
	\end{algorithmic}
	\end{minipage}
	}%
\end}

%
%
%

\newcommand{\ttop}{\mathsf{T}}


\begin{document}

\maketitle

\section{Introduction}
\label{sec:intro}
In this paper we investigate extensions of the famous Frank and Wolfe theorem
\cite{frank_and_wolfe,oettli,collatz,eaves,andronov}, which states that a quadratic function $f$ which is bounded below on a closed convex polyhedron 
$P$ attains its infimum on $P$. This has applications to linear complementarity 
problems, and a natural question is whether this property is shared by larger classes
of non-polyhedral convex sets $F$. 

The present work expands on \cite{JCA}, where the Frank-and-Wolfe property was successfully related
to asymptotic properties of a set $F$. 
Following this line, we presently obtain a complete
characterization of the Frank-and-Wolfe property within the class of Motzkin decomposable
sets.  In particular, the converse of a result of Kummer \cite{kummer} is obtained.


A second theme addresses versions of the Frank-and-Wolfe theorem where the class of quadratic functions
is further restricted. One may for instance ask for  sets $F$ on which convex or quasi-convex quadratics attain their finite infima.
It turns out that this class has a complete characterization as those sets which have no flat asymptotes in the sense of Klee.
 As a consequence we obtain a version of the Frank-and-Wolfe theorem which extends a result of Rockafellar \cite[Sect. 27]{rock} 
 and Belousov and Klatte \cite{klatte} on
 convex polynomials.

Invariance of Frank-and-Wolfe type sets under various operations such as finite
intersections, unions, cross-products, sums, and under affine images and pre-images are also investigated.

The structure of the chapter is as follows. In section
\ref{sec:FWsets} we give the definition and collect basic information on {\em FW}-sets.
In section \ref{qFW} we consider quasi-Frank-and-Wolfe sets, where a version
of the Frank and Wolfe theorem for quasi-convex quadratics is discussed.
It turns out that the same class allows  many more applications, 
as it basically suffices to have polynomial functions which have at least one 
convex sub-level set. In section \ref{sec:Motzkin} we consider sets with a generalized Motzkin decomposition of the form
$F = K+D$ with $K$ compact and $D$ a closed convex cone. This class was used
by Kummer \cite{kummer}, who proved a version of the Frank and Wolfe theorem in this class when $D$ is polyhedral.
We give a new proof of this result and also establish its inverse, that is, if
a Motzkin set satisfies the Frank and Wolfe theorem, then the cone $D$
must be polyhedral. Section \ref{invariance} discusses invariance properties
of the class of Motzkin sets with the Frank and Wolfe property.

\section*{Notations}

We generally follow Rockafellar's book \cite{rock}. The closure of a set $F$
is $\overline{F}$. The Euclidean norm in $\mathbb{R}^n$ is $\|\cdot\|$, and
the Euclidean distance is $\mathrm{dist}(x,y)=\|x-y\|$. For subsets $M,N$ of 
$\mathbb{R}^n$ we write $\mathrm{dist}(M,N)=\inf\{\|x-y\|: x\in M, y\in N\}$%
. A direction $d$ with $x+td\in F$ for every $x\in F$ and every $t\geq 0$ is
called a direction of recession of $F$, and the cone of all directions of
recession is denoted as $0^+F$.

A function $f(x) = \frac{1}{2} x^\mathsf{T} Ax + b^\mathsf{T} x + c$ with $A
= A^\mathsf{T}\in \mathbb{R}^{n\times n}$, $b\in \mathbb{R}^n$, $c\in 
\mathbb{R}$ is called quadratic. The quadratic $f:\mathbb{R}^n\to \mathbb{R}$
is quasi-convex on a convex set $F\subset \mathbb{R}^n$ if the sub-level sets
of $f_{|F}:F\to\mathbb{R}$ are convex. Similarly, $f$ is convex
on the set $F$ if $f_{|F}$ is convex.

\section{Frank-and-Wolfe sets}
\label{sec:FWsets}
The following definition is the basis for our investigation:

\begin{definition}
A set $F \subset \mathbb R^n$ is called a  {\em Frank-and-Wolfe set}, for short
a {\em FW}-set, if every quadratic function $f$ which is bounded below on $F$
attains its infimum on $F$. 
\end{definition}

In \cite{JCA} this notion was introduced for convex sets $F$,
but in the present note we extend it to arbitrary sets, as this property is not really related to convexity. 
The classical Frank-and-Wolfe theorem says that every closed convex polyhedron is
a {\em FW}-set, cf. \cite{frank_and_wolfe,oettli,collatz,eaves}. Here we are interested in identifying and characterizing more general classes of sets with this property. 
We start by collecting some basic information about  {\em FW}-sets.

\begin{proposition}
\label{affine}
{\em
Affine images of FW-sets are again FW-sets. }
\end{proposition}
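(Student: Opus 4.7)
The plan is to reduce the FW property of the affine image to the FW property of the original set by pulling back via the affine map. Let $F \subset \mathbb R^n$ be a FW-set and $T: \mathbb R^n \to \mathbb R^m$ an affine map, $T(x) = Ax + b$. Given a quadratic $g: \mathbb R^m \to \mathbb R$ that is bounded below on $T(F)$, the goal is to produce a point $y_0 \in T(F)$ attaining $\inf_{T(F)} g$.

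The natural candidate is to set $f := g \circ T$ and apply the FW property of $F$ to this function. First I would verify that $f$ is itself a quadratic function on $\mathbb R^n$: expanding
\[
f(x) = \tfrac{1}{2}(Ax+b)^\mathsf{T} Q (Ax+b) + q^\mathsf{T}(Ax+b) + r
\]
(where $g(y) = \tfrac{1}{2} y^\mathsf{T} Q y + q^\mathsf{T} y + r$) yields a quadratic form in $x$ with matrix $A^\mathsf{T} Q A$, linear part $A^\mathsf{T}(Qb+q)$, and a constant, so $f$ is quadratic in the sense of the Notations section.

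Next, since $\{f(x) : x \in F\} = \{g(y) : y \in T(F)\}$, we have $\inf_F f = \inf_{T(F)} g$, and in particular $f$ is bounded below on $F$. By the FW property of $F$, there exists $x_0 \in F$ with $f(x_0) = \inf_F f$. Setting $y_0 := T(x_0) \in T(F)$ gives $g(y_0) = f(x_0) = \inf_{T(F)} g$, which shows $g$ attains its infimum on $T(F)$.

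There is no real obstacle here; the proof rests entirely on the elementary observation that quadratics are closed under pre-composition with affine maps, combined with the trivial equality of the two infima. The argument does not require $F$ to be convex or closed, consistent with the excerpt's remark that the FW property has been extended to arbitrary sets.
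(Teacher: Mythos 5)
Your proof is correct and follows exactly the paper's argument: pull the quadratic back through the affine map, note that the composition is again quadratic and bounded below on $F$, apply the FW property of $F$, and push the minimizer forward. The only difference is that you spell out the expansion of $g\circ T$ and the equality of the two infima, which the paper leaves implicit.
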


\begin{proof}
Let $F$ be a {\em FW}-set in $\mathbb R^n$ and and $T:\mathbb R^n \to \mathbb R^m$ an affine
mapping. We have to show that $T(F)$ is a {\em FW}-set.
Let $f$ be a quadratic on $\mathbb R^m$ which is bounded below on $T(F)$, then $f \circ T$ is a quadratic
on $\mathbb R^n$, which is bounded below on $F$, hence attains its infimum at some $x\in F$. Then $f$ attains its infimum at
$Tx\in T(F)$.
\end{proof}

It is equally easy to see that every {\em FW}-set is closed, because if $x\in \overline{F}$, then
the quadratic function $f = \|\cdot - x\|^2$ has infimum 0 on $F$, and if this infimum is to be attained, then
$x\in F$. As a consequence, a
bounded set $F$ is {\em FW} iff it is closed, so 
there is nothing interesting to report on bounded {\em FW}-sets,
and the property is clearly aimed at the analysis of unbounded sets. 

One can go a little further than just proving closedness of {\em FW}-sets and get first information about their asymptotic
behavior. We need the following:

\begin{definition}
An affine manifold $M$ in $\mathbb R^n$ is called an $f$-{\em asymptote} of the set $F \subset \mathbb R^n$
if $F \cap M = \emptyset$ and dist$(F,M)=0$. 
\end{definition}

This 
expands on   Klee \cite{Kl60}, who introduced this notion for convex sets $F$. The symbol $f$ stands for {\em flat} asymptote.
This allows us now to
propose the following:

\begin{proposition}
\label{FWnof}    
{\em
Let $F$ be a FW-set. Then $F$ has no f-asymptotes.}
\end{proposition}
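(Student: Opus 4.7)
The plan is to argue by contradiction: assume $F$ has an $f$-asymptote $M$, and produce a quadratic function that is bounded below on $F$ but does not attain its infimum there, contradicting the FW-property.

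The natural candidate is the squared distance to $M$, namely $f(x) = \mathrm{dist}(x,M)^2$. The first step is to observe that this really is a quadratic function in the sense of the paper. Writing $M = a + L$ for some linear subspace $L \subset \mathbb{R}^n$ and letting $P$ denote the orthogonal projector onto $L^\perp$, one has
\[
\mathrm{dist}(x,M)^2 = \|P(x-a)\|^2 = (x-a)^\ttop P (x-a),
\]
since $P$ is symmetric and idempotent. Expanding gives an expression of the form $\tfrac{1}{2} x^\ttop A x + b^\ttop x + c$ with $A = 2P$, so $f$ is indeed quadratic.

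Next, since $\mathrm{dist}(\cdot,M)^2 \geq 0$, the function $f$ is bounded below on $F$, and the hypothesis $\mathrm{dist}(F,M) = 0$ yields $\inf_{x\in F} f(x) = 0$. Because $F$ is assumed to be a FW-set, this infimum must be attained at some $x_0 \in F$, giving $\mathrm{dist}(x_0,M) = 0$. But $M$ is closed (being an affine manifold), so $x_0 \in M$, whence $x_0 \in F \cap M$, contradicting the defining property $F \cap M = \emptyset$ of an $f$-asymptote.

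There is no real obstacle here; the only point that requires a moment's care is verifying that the squared distance to an affine manifold is a quadratic function in the precise sense used in the paper, which is handled by the projector identity above. Everything else is just unfolding the two definitions (FW-set and $f$-asymptote) against each other.
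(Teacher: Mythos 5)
Your proof is correct and follows essentially the same route as the paper: both exhibit a nonnegative quadratic vanishing exactly on $M$ (the paper uses $\|Ax-b\|^2$ for a representation $M=\{x:Ax=b\}$, you use the squared distance $\|P(x-a)\|^2$) and then let the FW-property force a point of $F\cap M$. Your choice even makes the step $\inf_F f=0$ immediate from $\mathrm{dist}(F,M)=0$, where the paper needs a short sequence argument.
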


\begin{proof}
Let $M$ be an affine subspace such that 
dist$(F,M)=0$. We have to show that $M\cap F\not=\emptyset$.
Let $M = \{x\in \mathbb R^n: Ax-b=0\}$ for a suitable matrix $A$ and vector $b$.
Put $f(x) = \|Ax-b\|^2$, then $f$ is quadratic, and $\gamma = \inf\{f(x) : x \in F\} \geq 0$. Now there exist $x_n\in F$ and 
$y_n\in M$ with dist$(x_n,y_n)\to 0$. But $Ay_n=b$, and $\|A(x_n-y_n)\| \leq \|A\| \|x_n-y_n\|\to 0$,
hence $Ax_n \to b$, which implies $\gamma = 0$. Now since $F$ is a $FW$-set, this infimum is attained,
hence there exists $x\in F$ with $f(x)=0$, which means $Ax=b$, hence $x\in M$. That shows
$F \cap M\not= \emptyset$, so $M$ is not an $f$-asymptote of $F$.
\end{proof}

\begin{remark}
An immediate consequence of Propositions \ref{affine}, \ref{FWnof} is that
affine images of {\em FW}-sets, and in particular, projections of {\em FW}-sets, are always closed.
\end{remark}

Yet another trivial fact is the following:

\begin{proposition}
{\em 
Finite unions of {FW}-sets are {FW}. }
\hfill $\square$
\end{proposition}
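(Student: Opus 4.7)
The plan is to reduce the infimum over the union to finitely many separate infima, each of which is attained by the FW-hypothesis on the individual pieces.

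More precisely, suppose $F_1, \dots, F_k$ are FW-sets and let $F = F_1 \cup \cdots \cup F_k$. Take any quadratic $f$ which is bounded below on $F$. First I would observe that $f$ is then bounded below on each $F_i$ (since $F_i \subset F$), so by the FW-property of $F_i$ there exists $x_i \in F_i$ attaining $\gamma_i := \inf_{F_i} f = f(x_i)$. Next I would note the identity
\[
\inf_F f \;=\; \min_{1 \leq i \leq k} \inf_{F_i} f \;=\; \min_{1 \leq i \leq k} \gamma_i,
\]
which holds because every point of $F$ lies in some $F_i$. Since this is a minimum over a finite set of real numbers, it is attained at some index $i_0$, and then $x_{i_0} \in F_{i_0} \subset F$ realizes $\inf_F f$.

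There is really no obstacle here: the argument only uses that the union is finite (so that the minimum of the $\gamma_i$ exists) and that each $f_{|F_i}$ is bounded below whenever $f_{|F}$ is. In particular, closedness of $F$ follows for free from closedness of each $F_i$, consistent with the earlier observation that FW-sets are closed.
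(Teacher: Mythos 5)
Your argument is correct and is precisely the routine reduction the paper has in mind when it marks this proposition as immediate (the paper omits the proof entirely). The only cosmetic caveat is to discard any empty $F_i$ before selecting the minimizers $x_i$, which does not affect the conclusion.
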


We conclude this preparatory section by looking at invariance of the $FW$-class under affine pre-images.
First we need the following:

\begin{proposition}
\label{prodFW}
{\em
If $F\subset \mathbb{R}^{n}$ is a FW-set and $M\subset \mathbb{R}^{m}$ is
an affine manifold, then $F\times M$ is a FW-set in $\mathbb R^n \times \mathbb R^m$.}
\end{proposition}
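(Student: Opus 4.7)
The plan is a two-step reduction followed by a direct case analysis on a single real variable.

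\emph{Reduction.} Parametrize $M$ as $\phi(\mathbb{R}^k)$ for some affine bijection $\phi:\mathbb{R}^k \to M$, where $k = \dim M$. The map $(x, v) \mapsto (x, \phi(v))$ is affine from $\mathbb{R}^n \times \mathbb{R}^k$ onto $\mathbb{R}^n \times M$ and sends $F \times \mathbb{R}^k$ onto $F \times M$, so by Proposition \ref{affine} it suffices to prove that $F \times \mathbb{R}^k$ is a FW-set. Writing $F \times \mathbb{R}^k = (F \times \mathbb{R}^{k-1}) \times \mathbb{R}$ and noting that the case $k = 0$ is trivial, induction on $k$ reduces the task to the single statement: if $F \subset \mathbb{R}^n$ is FW, then $F \times \mathbb{R}$ is FW in $\mathbb{R}^{n+1}$.

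\emph{The base case.} I would take a quadratic $g$ on $\mathbb{R}^{n+1}$ bounded below on $F \times \mathbb{R}$ and expand
\[ g(x, t) = \tfrac{1}{2}\alpha t^2 + \ell(x)\, t + q(x), \]
with $\alpha \in \mathbb{R}$, $\ell$ affine, and $q$ quadratic on $\mathbb{R}^n$. If $\alpha < 0$, then $g(x_0, t) \to -\infty$ as $|t| \to \infty$ for any fixed $x_0 \in F$, contradicting boundedness. If $\alpha > 0$, partial minimization in $t$ produces the quadratic $h(x) := \min_{t \in \mathbb{R}} g(x, t) = q(x) - \ell(x)^2/(2\alpha)$, which is bounded below on $F$ because $\inf_{F \times \mathbb{R}} g = \inf_F h$; the FW property of $F$ supplies a minimizer $x^\star \in F$ of $h$, and then $(x^\star, -\ell(x^\star)/\alpha)$ is a minimizer of $g$ on $F \times \mathbb{R}$. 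Finally, if $\alpha = 0$, boundedness forces $\ell(x) = 0$ for every $x \in F$ (otherwise $g(x_0, \cdot)$ is unbounded on $\mathbb{R}$ for some $x_0 \in F$); hence $g(x, t) = q(x)$ on $F \times \mathbb{R}$, and the FW property of $F$ applied to $q$ furnishes $x^\star \in F$ so that $(x^\star, 0)$ minimizes $g$.

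The only non-routine ingredient is the partial minimization in the case $\alpha > 0$: the observation that $\min_t g(x,t)$ is again quadratic in $x$ is precisely what lets the FW property of $F$ propagate to $F \times \mathbb{R}$. Everything else is bookkeeping combined with Proposition \ref{affine}.
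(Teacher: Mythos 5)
Your proof is correct and follows essentially the same route as the paper's: reduce to $F\times\mathbb{R}$ by affine invariance and induction, then minimize out $t$ with a case analysis on the sign of the coefficient of $t^2$. The only cosmetic difference is that you merge the paper's two subcases of the degenerate situation ($b=0$, $c\neq 0$ versus $b=0$, $c=0$) into the single observation that $\ell$ must vanish identically on $F$.
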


\begin{proof}
Since translates of $FW$-sets are $FW$-sets,  
we may assume that $M$ is a linear subspace, and then   there is
no loss of generality in assuming that $M=\mathbb{R}^{m}.$  Moreover, by an easy
induction argument, we only need to consider the case when $m=1.$ 

Let $q$ be
a quadratic function on $\mathbb{R}^{n}\times \mathbb{R}$ bounded below on $%
F\times \mathbb{R}$. We can write $q\left( x,t\right) =\frac{1}{2}x^{T}Ax+%
\frac{1}{2}bt^{2}+tc^{T}x+d^{T}x+et+f$ for suitable $A,$ $b,$ $c,$ $d$, $e$
and $f.$ Clearly, $b\geq 0$, as otherwise $q$ could not be bounded below on $F \times \mathbb R$.  Now  we have $\inf_{\left( x,t\right) \in F\times 
\mathbb{R}}q\left( x,t\right) =\inf_{x\in F}\inf_{t\in \mathbb{R}}q\left(
x,t\right) .$ 

First consider the case
$b>0$.  Then the inner infimum in the preceding expression
is attained at $t=-\frac{c^{T}x+e}{b}.$     Hence we have $\inf_{\left(
x,t\right) \in F\times \mathbb{R}}q\left( x,t\right) =\inf_{x\in F}q\left(
x,-\frac{c^{T}x+e}{b}\right) .$ Given that $q\left( x,-\frac{c^{T}x+e}{b}%
\right) $ is a quadratic function of $x$ and is obviously bounded below on $%
F,$ it attains its infimum over $F$ at some $\overline{x}\in F.$ Therefore $q$
attains its infimum over $F\times \mathbb{R}$ at $\left( \overline{x},-\frac{%
c^{T}\overline{x}+e}{b}\right) .$ 

Now consider the case $b=0$, $c \not=0$.
Here $F$ must be contained in
the hyperplane $c^{T}x+e=0.$  Substituting this,  we get
$\inf_{\left( x,t\right) \in F\times \mathbb{R}}q\left( x,t\right)
=\inf_{x\in F}\left\{ \frac{1}{2}x^{T}Ax+d^{T}x\right\} +f.$ Hence, the
quadratic function given by $\frac{1}{2}x^{T}Ax+d^{T}x$ is bounded below on $%
F$ and, for every minimizer $\overline{x}\in F$ and every $t\in \mathbb{R,}$
the point $\left( \overline{x},t\right) $ is a minimizer of $q$ over $%
F\times \mathbb{R}$.

Finally, when $b=0$, $c=0$ it follows that we must also have $e=0$, so $q$ no longer depends on $t$,
and we argue as in the previous case.
\end{proof}

\begin{remark}
As we shall see in the next section (example 1), the cross product $F_1 \times F_2$  of two $FW$-sets $F_i$ is in general no longer
a $FW$-set, so Proposition \ref{prodFW} exploits the very particular situation.
\end{remark}

We have the following consequence:

\begin{corollary}
\label{cor1}
{\em
Let $F$ be a FW-set in $\mathbb R^n$ and $M$ an affine subspace of $\mathbb R^n$. Then $F+M$ is a FW-set.}
\end{corollary}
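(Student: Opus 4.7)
The plan is to combine Proposition \ref{prodFW} with Proposition \ref{affine} via the obvious addition map. Specifically, I would form the product $F\times M\subset \mathbb{R}^n\times \mathbb{R}^n$, note that it is a FW-set by Proposition \ref{prodFW} (since $M$ is itself an affine manifold in $\mathbb{R}^n$), and then observe that $F+M$ is the image of $F\times M$ under the affine (indeed, linear) map
\[
T:\mathbb{R}^n\times\mathbb{R}^n\to\mathbb{R}^n,\qquad T(x,y)=x+y.
\]

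Given this, Proposition \ref{affine} immediately yields that $T(F\times M)=F+M$ is a FW-set, which is what we need. No quadratic manipulation is required at this stage, because both the product step and the affine-image step have already been isolated as lemmas.

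The only subtle point, if any, is checking that the hypothesis of Proposition \ref{prodFW} genuinely applies. Proposition \ref{prodFW} is stated for $F\subset\mathbb{R}^n$ a FW-set and $M\subset\mathbb{R}^m$ an affine manifold in a possibly different space, so one must read it with $m=n$ and take $M$ as the given affine subspace; the ambient dimension plays no role in the argument. Once this is noted, the two-line deduction above completes the proof, and I do not foresee any real obstacle.
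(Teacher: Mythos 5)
Your proposal is correct and follows exactly the paper's own argument: form $F\times M$, invoke Proposition \ref{prodFW}, and push forward under the linear addition map using Proposition \ref{affine}. Your remark about reading Proposition \ref{prodFW} with $m=n$ is a fair observation but poses no difficulty, as the ambient dimension of $M$ is indeed irrelevant there.
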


\begin{proof}
$F \times M$ is a {\em FW}-set by Proposition \ref{prodFW}, and its image
under the mapping $(x,y) \to x+y$ is a {\em FW}-set by Proposition \ref{affine}, and that set is $F+M$. 
\end{proof}

Concerning pre-images, we have the following consequence of Proposition \ref{prodFW}:

\begin{proposition}
\label{prop5}
{\em
Let $T$ be an affine operator and suppose the $FW$-set  $F$  is contained in the range of $T$.
Then $T^{-1}(F)$ is a $FW$-set.}
\end{proposition}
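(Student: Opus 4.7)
The plan is to express $T^{-1}(F)$ as the sum of an affine image of $F$ with a linear subspace, so that Propositions \ref{affine} and Corollary \ref{cor1} apply directly. The hypothesis $F\subset\operatorname{range}(T)$ is what will enable this decomposition; without it the proposition fails, since already $T^{-1}(F)=\emptyset$ need not be $FW$ only if we allow empty sets, but more seriously $T^{-1}$ cannot ``undo'' parts of $F$ that lie outside the range.

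Writing $T(x)=Ax+b$, I set $V=\ker A$ and fix a linear complement $W$ of $V$ in $\mathbb{R}^n$, so that $A|_W$ is a linear isomorphism onto $\operatorname{range}(A)$. I then build a genuinely affine map $\sigma:\mathbb{R}^m\to\mathbb{R}^n$ whose restriction to $b+\operatorname{range}(A)$ is a right inverse of $T$; concretely, with $\Pi$ any linear projection of $\mathbb{R}^m$ onto $\operatorname{range}(A)$, take $\sigma(y)=(A|_W)^{-1}(\Pi(y-b))$. Since $F\subset\operatorname{range}(T)=b+\operatorname{range}(A)$ by hypothesis, this yields $T(\sigma(y))=y$ for every $y\in F$.

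The central step is then the identity $T^{-1}(F)=\sigma(F)+V$. One inclusion is transparent: $T(\sigma(y)+v)=y+Av=y\in F$ for $y\in F$ and $v\in V$. For the reverse, given $x\in T^{-1}(F)$ I decompose $x=\sigma(T(x))+\bigl(x-\sigma(T(x))\bigr)$ and observe that $A\bigl(x-\sigma(T(x))\bigr)=T(x)-T(\sigma(T(x)))=0$, so the remainder lies in $V$, while the first summand lies in $\sigma(F)$ because $T(x)\in F$.

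With the identity in place the proof closes mechanically: $\sigma(F)$ is a $FW$-set as the affine image of the $FW$-set $F$ (Proposition \ref{affine}), and therefore $\sigma(F)+V$ is a $FW$-set by Corollary \ref{cor1}, since $V$ is a linear (hence affine) subspace of $\mathbb{R}^n$. The only delicate point I anticipate is arranging that $\sigma$ be a bona fide affine map on all of $\mathbb{R}^m$ so that Proposition \ref{affine} applies verbatim, while simultaneously making $T\circ\sigma$ act as the identity on $F$; the range hypothesis $F\subset\operatorname{range}(T)$ is precisely what reconciles these two requirements.
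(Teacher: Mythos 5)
Your proof is correct and follows essentially the same route as the paper: both decompose $T^{-1}(F)$ as the image of $F$ under an affine right inverse of $T$ plus the subspace $\ker A$, then invoke Proposition \ref{affine} and Corollary \ref{cor1}. The only difference is presentational --- the paper first reduces to the case of a surjective linear $T$ and uses $(T_{|\ker(T)^{\perp}})^{-1}$, whereas you keep $T$ affine and build the globally defined right inverse $\sigma$ explicitly via the projection $\Pi$; the range hypothesis plays the same role in both arguments.
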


\begin{proof}
Since the notion of a $FW$-set is invariant under translations and under coordinate changes, we can assume that
$T$ is a  surjective linear operator $T:\mathbb R^n \to \mathbb R^m$ and $F \subset \mathbb R^m$. Now
$\widetilde{F} = (T_{|{\rm ker}(T)^\perp})^{-1}(F)$ is an affine image of the $FW$-set
$F$, hence by Proposition \ref{affine} is a $FW$-subset of $\mathbb R^n$. By Corollary \ref{cor1} the set $\widetilde{F}  + {\rm ker}(T)$ is a
$FW$-set, but this set is just $T^{-1}(F)$.
\end{proof}

{\color{black}
\begin{remark}
It is not clear whether this result remains true when $F$ is not entirely contained in the range of $T$, i.e., when
only $F \cap {\rm range}(T)\not=\emptyset$. In contrast, see Corollary \ref{cor4} and Proposition \ref{pre-image}.
\end{remark}
}

More sophisticated invariance 
properties of the class of {\em FW}-sets will be investigated
later. For instance, one may ask whether or under which conditions finite intersections, cartesian
products,  or closed subsets of {\em FW}-sets are again {\em FW}.
\\

%

\section{Frank-and-Wolfe theorems for restricted classes of quadratic functions}
\label{qFW}

Following \cite{JCA} it is also of interest to investigate
versions of the Frank and Wolfe theorem, where the class of quadratic functions is further
restricted. The following notion is from \cite{JCA}:

\begin{definition}
A convex set $F \subset \mathbb R^n$ is called a {\em quasi-Frank-and-Wolfe set},
for short a {\em qFW}-set,  if every quadratic function $f$,
which is quasi-convex on $F$ and bounded below on $F$, attains its infimum on $F$. 
\end{definition}

Note that
for the class of {\em qFW}-sets we have to maintain convexity as part of the definition,
as otherwise absurd situations might occur,
so the notion is precisely as introduced
in \cite{JCA}.

\begin{remark}
Every convex {\em FW}-set is clearly a {\em qFW}-set.  The converse is not true, i.e., {\em qFW}-sets
need not be {\em FW}, as will be seen in Example \ref{example_LZ}.  It is again clear that
{\em qFW}-sets are closed, and that affine images of {\em qFW}-sets
are {\em qFW}.  
\end{remark}

It turns out that $f$-asymptotes are the key to understanding
the quasi-Frank-and-Wolfe property. We have  the following:

\begin{theorem}
\label{asymptote} 
{\em Let $F$ be a convex set in $\mathbb{R}^n$. Then the
following statements are equivalent:
\begin{enumerate}
\item[\rm (1)] Every polynomial $f$ which has at least one nonempty convex sub-level set
on $F$ and which is bounded below on $F$ attains its infimum on $F$.
%
\item[\rm (2)]  $F$ is a qFW-set.
\item[\rm (3)] Every quadratic function $q$ which is convex on $F$ and bounded
below on $F$ attains its infimum on $F$. 
\item[\rm (4)] $F$  has no {f}-asymptotes.
\item[\rm (5)] $T(F)$ is closed for every affine mapping $T$.
\item[\rm (6)] $P(F)$ is closed for every orthogonal projection $P$.
\end{enumerate}
}
\end{theorem}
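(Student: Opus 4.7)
The plan is to prove the equivalence as a cycle. The chain $(1)\Rightarrow(2)\Rightarrow(3)$ is immediate from inclusion of function classes, since every convex quadratic is quasi-convex and every quasi-convex quadratic has convex sub-level sets. The step $(3)\Rightarrow(4)$ reproduces the proof of Proposition \ref{FWnof}, whose test function $f(x)=\|Ax-b\|^2$ is already a convex quadratic, so $(3)$ forces $M\cap F\neq\emptyset$ for any would-be $f$-asymptote $M=\{Ax=b\}$. Finally $(5)\Rightarrow(6)$ is trivial, since orthogonal projections are affine.

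For the triangle $(4)\Leftrightarrow(5)\Leftrightarrow(6)$ I would argue geometrically. For $(4)\Rightarrow(5)$, suppose $T(F)$ is not closed and choose $y_0\in\overline{T(F)}\setminus T(F)$ with $Tx_n\to y_0$ for some $x_n\in F$. Set $M:=T^{-1}(y_0)$; then $M\cap F=\emptyset$, and using that $T$ restricted to $(\ker T)^{\perp}$ is a linear isomorphism onto its range (hence satisfies $\|Tv\|\geq c\|v\|$ for some $c>0$ on that subspace), one derives $\mathrm{dist}(x_n,M)\to 0$, so $M$ is an $f$-asymptote of $F$, contradicting $(4)$. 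For $(6)\Rightarrow(4)$, given an $f$-asymptote $M=m_0+L$ of $F$, the orthogonal projection $P$ onto $L^{\perp}$ collapses $M$ to the single point $P(m_0)$; the asymptote hypothesis yields points of $P(F)$ arbitrarily close to $P(m_0)$, while any exact preimage in $F$ would have to lie on $M$, contradicting $M\cap F=\emptyset$, so $P(F)$ fails to be closed.

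The main obstacle is closing the cycle with $(4)\Rightarrow(1)$. Let $p$ be a polynomial bounded below on $F$ with $\gamma:=\inf_F p$, and let $S_\beta:=\{x\in F:p(x)\leq\beta\}$ be non-empty and convex with $\gamma<\beta$. The crucial observation is that $p$ is constant along every recession direction of $S_\beta$: for $x\in S_\beta$ and $d\in 0^+S_\beta$, the univariate polynomial $t\mapsto p(x+td)$ takes values in $[\gamma,\beta]$ on $[0,\infty)$ and, being bounded on a half-line, must be constant. Setting $L:=\mathrm{span}(0^+S_\beta)$, any $\ell\in L$ is a finite combination of elements of $0^+S_\beta$, and chaining the one-dimensional constancy statements yields $p(x+\ell)=p(x)$ for every $x\in S_\beta$ and $\ell\in L$. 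Letting $Q$ denote the orthogonal projection onto $L^{\perp}$, condition $(6)$ makes $Q(F)$ closed, and any $f$-asymptote $M'$ of $Q(F)$ would pull back to the $f$-asymptote $M'+L$ of $F$, so $Q(F)$ again satisfies $(4)$. An induction on the ambient dimension now reduces the problem to the base case $L=\{0\}$, in which $S_\beta$ has trivial recession cone, is therefore compact, and the continuous $p$ attains its minimum on $S_\beta$. The technical point to be handled with care is the consistency of the reduction, namely that the induced polynomial $\tilde p:=p|_{L^{\perp}}$ inherits a convex sub-level set on $Q(F)$ and that $\inf_{Q(F)}\tilde p=\gamma$; both follow from the factorization $p(x)=\tilde p(Q(x))$ valid on $S_\beta$.
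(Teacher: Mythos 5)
Your overall architecture is sound, and most implications match the paper's: (1)$\Rightarrow$(2)$\Rightarrow$(3) by inclusion of function classes, (3)$\Rightarrow$(4) by recycling the convex quadratic $\|Ax-b\|^2$ from Proposition \ref{FWnof}, and (4)$\Rightarrow$(5)$\Rightarrow$(6) essentially as in the paper; your extra implication (6)$\Rightarrow$(4) is correct and harmless, though the paper instead closes the cycle through (6)$\Rightarrow$(1). The genuine gap is in your final step, and it sits exactly where the real work of the theorem is. You establish $p(x+\ell)=p(x)$ only for $x\in S_\beta$ and $\ell\in L=\mathrm{span}(0^+S_\beta)$, and you then rest the whole reduction on ``the factorization $p=\tilde p\circ Q$ valid on $S_\beta$''. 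That local factorization is not enough, because the induction hypothesis must be applied to $\widetilde F=Q(F)$, not to $Q(S_\beta)$. Three things are needed that constancy on $S_\beta$ alone does not give: (i) that $\tilde p$ is bounded below on all of $Q(F)$ --- for $y=Q(x)$ with $x\in F\setminus S_\beta$ one has $\tilde p(y)=p(y)$ with $y=x-(x-Q(x))$ a point that need not lie in $F$ at all, so nothing prevents $\inf_{Q(F)}\tilde p<\gamma$ or even $-\infty$; (ii) that the sub-level set $\{y\in Q(F):\tilde p(y)\le\beta\}$ coincides with the convex set $Q(S_\beta)$ --- the inclusion $\supseteq$ is the easy half, while $\subseteq$ requires the factorization at arbitrary points of $F$; and (iii) that a minimizer of $\tilde p$ on $Q(F)$ pulls back to a minimizer of $p$ on $F$. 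Replacing $F$ by $S_\beta$ is not a way out either, since a closed convex subset of a set without $f$-asymptotes may well have $f$-asymptotes.

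The paper's fix, which you need to insert before projecting, is a globalization via the rigidity of polynomials: first reduce to the case where $F$ is full-dimensional (otherwise $F$ lies in a hyperplane and the induction hypothesis applies directly); then $S_\beta$ has nonempty interior, so the polynomial identity $p(x+td)-p(x)=0$, holding for all $x$ in a nonempty open set and all $t\in\mathbb R$, holds identically on $\mathbb R^n\times\mathbb R$. Once the identity is global, $p$ genuinely factors through the projection everywhere and points (i)--(iii) become immediate. This also repairs a secondary flaw in your ``chaining'' step: for $\ell=\sum_i t_id_i$ with some $t_i<0$ the intermediate points need not lie in $S_\beta$, so the one-dimensional constancy statements cannot be concatenated as written (one can instead observe that a polynomial bounded on an orthant is constant, but the global identity is needed anyway). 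A difference that does work is cosmetic: you quotient by the whole span $L$ in one stroke, whereas the paper removes one recession direction per induction step; both schemes terminate since the dimension strictly decreases.
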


\begin{proof}
The implication (1) $\implies$ (2) is clear, because for a quasi-convex function on
$F$ {\em every} sub-level set on $F$ is convex. 
The implication (2) $\implies$ (3) is also evident. 
Implication (3) $\implies$ (4)  
follows immediately with the same proof as Proposition \ref{FWnof}, because
the quadratic $f(x)=\|Ax-b\|^2$ used there is convex.

Let us prove (4) $\implies$ (5).  
We may without loss of generality assume
that $T$ is linear, as properties (4) and (5) are invariant under translations.
Suppose $T(F)$ is not closed and pick $y \in \overline{T(F)}\setminus T(F)$.
Put $M = T^{-1}(y)$,   then $M$ is an affine manifold. Note that
$M \cap F = \emptyset$, because $T(M) =\{y\}$. Now pick $y_n \in T(F)$
such that $y_n \to y$, and choose $x_n   \in T^{-1}(y_n) \cap F$. Since $T$ is affine there exist
$x_n'\in T^{-1}(y_n)$ such that $x_n'\to x'\in T^{-1}(y)$. We have 
$\|x_n - (x' - x_n' + x_n)\| \to 0$, with $x_n \in F$,  and since
$x_n-x'_n\in {\rm ker}(T)$, we have $x' -x_n'+x_n \in x' + {\rm ker}(T) = M$.
That proves
dist$(F,M)=0$, and so $F$ has $M$ as  an $f$-asymptote, a contradiction.

The implication (5) $\implies$ (6) is clear.
Let us prove (6) $\implies$ (1). 
We will prove this by	induction on $n$. For $n=1$ the implication is clearly
true, because any polynomial $f:\mathbb{R}\rightarrow \mathbb{R}$
which is bounded below on a  convex set $F\subset \mathbb{R}$ satisfying (6) attains
its infimum on $F$, as (6) implies that $F$ is closed. Suppose therefore that the result is true for dimension $%
n-1$, and consider a polynomial $f:\mathbb{R}^{n}\rightarrow \mathbb{%
R}$ which is bounded below on a set $F\subset \mathbb R^n$ with property (6) such that $S_{\alpha }:=\{x\in F:f(x)\leq
\alpha \}$ is nonempty and convex for some $\alpha \in \mathbb{R}.$ We may without loss of generality
assume that the dimension of $F$ is $n$, i.e., that $F$ has nonempty interior, as
otherwise $F$ is contained in a hyperplane,
and then the result follows directly from the induction hypothesis. If $%
\alpha =\gamma :=\inf \{f(x):x\in F\},$ then $f$ clearly attains $\alpha ,$
so we assume from now on that $\alpha >\gamma $. If $S_{\alpha }:=\{x\in
F:f(x)\leq \alpha \}$ is bounded, then by the Weierstrass extreme value
theorem the infimum of $f$ over $S_{\alpha }$ is attained, because by hypothesis (6) the set  $F$ is closed. But this infimum
is also the infimum of $f$ over $F$, so in this case we are done. Assume
therefore that $S_{\alpha }$ is unbounded. Since $S_{\alpha }$ is a closed
convex set, it has a direction of recession $d$, that is, $x+td\in S_{\alpha
}$ for every $t\geq 0$ and every $x\in S_{\alpha }$. Fix $x\in S_{\alpha }$.
This  means
\[
\gamma \leq f(x+td) \leq \alpha
\]
for every $t\geq 0$. 
Since $t \mapsto f(x+td)$ is a polynomial on the real line, which is now bounded on 
$[0,\infty)$, it must be constant as a function of $t$, so that
$f(x) = f(x+td)$ for all $t\geq 0$, and then clearly also 
$f(x+td) = f(x)$ for every $t\in \mathbb R$.
But the argument
is valid for every $x\in S_{\alpha }$. By assumption $F$ has dimension $n$,
so $S_{\alpha }$ has nonempty interior. 
That shows $f(x+td) = f(x)$ for all $x$ in a nonempty open set contained in $S_\alpha$ 
and all $t\in \mathbb R$. 
Altogether, since $f$ is a polynomial,  we obtain%
\begin{equation}
f(x+td)=f(x)\text{ for every }x\in \mathbb{R}^{n}\text{ and every }t\in 
\mathbb{R}.  \label{same}
\end{equation}%

Now let $P$ be the orthogonal projection onto the hyperplane $H=d^{\perp }$.
Then $\widetilde{f}:=f_{|H}$ 
is a polynomial on the $(n-1)$%
-dimensional space $H$ and takes the same values as $f$ due to (\ref{same}).
In particular, $\widetilde{f}=f_{|H}$ 
is bounded below on the 
set $\widetilde{F}=P(F)$. 

We argue that the induction hypothesis applies to $\widetilde{F}$.
Indeed, $\widetilde{F}$  being the image of $F$ under a projection, is closed
by condition (6). Its dimension is $n-1$, and moreover, every
projection of $\widetilde{F}$ is closed, because
any such projection is also a projection of $F$.

It remains to prove that the
restriction of $\widetilde{f}$ to $\widetilde{F}$ has a nonempty convex
sub-level set. To this end it will suffice to prove that, for $\widetilde{S}%
_{\alpha }:=\{x\in \widetilde{F}:\widetilde{f}(x)\leq \alpha \},$ one has $%
\widetilde{S}_{\alpha }=P\left( S_{\alpha }\right) .$ This will easily
follow from the observation that $\widetilde{f}\circ P=f,$ which is an
immediate consequence of (\ref{same}).  Let $x\in \widetilde{S}_{\alpha }.$
Since $x\in \widetilde{F},$   we have $P\left(
x^{\prime }\right) =x$ for some $x^{\prime}\in F$,  and hence $f\left( x^{\prime }\right) =\left( 
\widetilde{f}\circ P\right) \left( x^{\prime }\right) =\widetilde{f}\left(
P\left( x^{\prime }\right) \right) =\widetilde{f}\left( x\right) \leq \alpha
,$ which proves that $x^{\prime }\in S_{\alpha }.$  Therefore $x\in P\left(
S_{\alpha }\right)$, which shows  $\widetilde{S}_{\alpha }\subset P\left(
S_{\alpha }\right).$ To prove the opposite inclusion, let $x\in P\left(
S_{\alpha }\right).$ We then have $x=P\left( x^{\prime }\right) $ for some $%
x^{\prime }\in S_{\alpha }.$ From the inclusion $S_{\alpha }\subset F,$ it
follows that $x\in P(F)=\widetilde{F}.$ On the other hand, $\widetilde{f}%
\left( x\right) =f\left( x^{\prime }\right) \leq \alpha .$  This shows
$x\in \widetilde{S}_{\alpha }$  and proves the inclusion $P\left(
S_{\alpha }\right) \subset \widetilde{S}_{\alpha }$ and hence our claim $%
\widetilde{S}_{\alpha }=P\left( S_{\alpha }\right).$

Altogether, $\widetilde{f%
}$ now attains its infimum on $\widetilde{F}$ by the induction hypothesis, 
and then $f$, having the same values, also attains
its infimum on $F$.
This proves the validity of (1).
\end{proof}

\begin{remark}
The equivalence of (4) and (6) can already be found in \cite{Kl60}.
\end{remark}

\begin{remark}
All that matters in condition (1) is the {\em rigidity} of polynomials. Any class $\mathcal F(L)$ of continuous functions
defined on affine subspaces $L$ of $\mathbb R^n$
with the following properties would work as well:
(i) $\mathcal F(L)$ is defined for every $L \subset \mathbb R^n$ and every  $n$.
(ii) If $f \in \mathcal  F(\mathbb R)$ is bounded below on a closed interval on $\mathbb R$, then $f$ attains its infimum. 
(iii) If $f\in \mathcal F(\mathbb R^n)$ and    $H$ is a hyperplane in $\mathbb R^n$, then
$f_{|H} \in \mathcal F(H)$. (iv) If $f \in \mathcal F(\mathbb R^n)$ is bounded (above and below)
on some ray $x + \mathbb R^+d \subset \mathbb R^n$, then $f$  does not depend on $d$, i.e.,
$f(x) = f(x+td)$ for all $t \in \mathbb R$. 
\end{remark}

We had seen in section \ref{sec:FWsets} that {\em FW}-sets have no
$f$-asymptotes. Moreover, from the results of this section we see that if $F$
is convex and has no $f$-asymptotes, then it is already a {\em qFW}-set. This rises the question whether the absence
of $f$-asymptotes also serves to characterize
{\em FW}-sets, or if not, whether it does so at least for convex $F$.  
We indicate by way of two examples that this is not the case, i.e.,  the absence of $f$-asymptotes does \emph{not}
characterize Frank-and-Wolfe sets. Or put  differently, there exist
quasi-Frank-and-Wolfe sets which are not Frank-and-Wolfe.

\begin{example}
\label{example_LZ}
We construct a closed convex set $F$ without $f$-asymptotes, which is not
Frank-and-Wolfe. We use Example 2 of \cite{LZ99}, which we reproduce here
for convenience. Consider the optimization program 
\begin{eqnarray*}
\begin{array}{ll}
\mbox{minimize} & q(x) = x_1^2 -2x_1x_2 + x_3x_4 +1 \\ 
\mbox{subject to} & c_1(x) = x_1^2 - x_3 \leq 0 \\ 
& c_2(x) = x_2^2-x_4 \leq 0 \\ 
& x\in \mathbb{R}^4%
\end{array}%
\end{eqnarray*}
then as Lou and Zhang \cite{LZ99} show the constraint set $F=\{x\in \mathbb{R%
}^4: c_1(x)\leq 0, c_2(x)\leq 0\}$ is closed convex, and the quadratic
function $q$ has infimum $\gamma=0$ on $F$, but this infimum is not
attained.

Let us show that $F$ has no $f$-asymptotes. Note that $F=F_1 \times F_2$,
where $F_1 = \{(x_1,x_3)\in \mathbb{R}^2: x_1^2-x_3\leq 0\}$, $F_2 =
\{(x_2,x_4)\in \mathbb{R}^2: x_2^2-x_4 \leq 0\}$. Observe that $F_1\cong F_2$%
, and that $F_1$ does not have asymptotes, being a parabola. Therefore, $F$
does not have $f$-asymptotes either. This can be seen from the following:
\end{example}



\begin{proposition}
\label{intersection}{\em
Any nonempty finite intersection of $qFW$-sets is again a $qFW$-set.}
\end{proposition}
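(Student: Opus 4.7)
The plan is to use the characterization of $qFW$-sets via absence of $f$-asymptotes, i.e., (2) $\Leftrightarrow$ (4) of Theorem \ref{asymptote}. A routine induction on the number of sets reduces the problem to two $qFW$-sets $F_1, F_2 \subset \mathbb{R}^n$ with $F := F_1 \cap F_2 \neq \emptyset$, and the main argument is then an induction on the ambient dimension $n$.

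Suppose, for contradiction, that $F$ has an $f$-asymptote $M$, and pick sequences $x_k \in F$, $y_k \in M$ with $\|x_k - y_k\| \to 0$. Boundedness of $\{x_k\}$ would force a common limit in $F \cap M$, impossible. So $\|x_k\| \to \infty$, and on a subsequence $x_k/\|x_k\| \to d$ for some unit vector $d \in 0^+F_1 \cap 0^+F_2$. A similar analysis of $\{y_k\}$ shows that $d$ also lies in the direction subspace of $M$.

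For the inductive step, enlarge each $F_i$ to $F_i' := F_i + \mathbb{R}d$, which is $qFW$ by the $qFW$-analogue of Corollary \ref{cor1} (the proof of Proposition \ref{prodFW} carries over using condition (5) of Theorem \ref{asymptote}). Now $d$ lies in the lineality of both $F_1'$ and $F_2'$. The critical step is to show that $M$ remains an $f$-asymptote of $F_1' \cap F_2'$. The distance condition holds since $F \subset F_1' \cap F_2'$. For disjointness, if $x^* \in (F_1' \cap F_2') \cap M$, write $x^* = a_1 + s_1 d = a_2 + s_2 d$ with $a_i \in F_i$; since $d$ is a direction of $M$, both $a_i = x^* - s_i d$ lie in $M$, and the identity $a_2 + t d = a_1 + (s_1 - s_2 + t) d$ combined with $d \in 0^+F_1 \cap 0^+F_2$ places $a_2 + t d$ in $F_1 \cap F_2 \cap M = F \cap M$ for all sufficiently large $t$, contradicting $F \cap M = \emptyset$. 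Projecting orthogonally onto $H := d^\perp$ then gives $F_i' = P(F_i') + \mathbb{R}d$, hence $F_1' \cap F_2' = (P(F_1') \cap P(F_2')) + \mathbb{R}d$ and $M = P(M) + \mathbb{R}d$, so that $P(M)$ is an $f$-asymptote of $P(F_1') \cap P(F_2')$ inside $H \cong \mathbb{R}^{n-1}$. Since $P(F_i')$ is an affine image of a $qFW$-set and is therefore $qFW$ in $H$, the inductive hypothesis supplies the required contradiction.

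The principal obstacle is precisely the disjointness verification $(F_1' \cap F_2') \cap M = \emptyset$: the argument hinges on $d$ being simultaneously a recession direction of $F$ and a direction of $M$, which lets one slide any candidate intersection point along a $d$-ray into the forbidden set $F \cap M$.
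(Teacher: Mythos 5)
Your argument is correct, but it takes a genuinely different route from the paper. The paper's proof is two lines: it invokes the equivalence (2) $\Leftrightarrow$ (4) of Theorem \ref{asymptote} and then cites Klee's theorem \cite[Thm.~4]{Kl60} that finite intersections of sets without $f$-asymptotes have no $f$-asymptotes. You use the same characterization but then essentially \emph{reprove} Klee's intersection theorem from scratch, by extracting a unit recession direction $d$ common to $F_1$, $F_2$ and to the direction space of the putative asymptote $M$, enlarging each $F_i$ to $F_i+\mathbb{R}d$, checking that $M$ is still missed, and quotienting out $d$ to descend to dimension $n-1$. The disjointness verification (sliding a candidate point of $(F_1'\cap F_2')\cap M$ along a $d$-ray into $F\cap M$) is the right key step and is carried out correctly. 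What the paper's route buys is brevity at the cost of an external reference; what yours buys is a self-contained proof of the cited result. Two small points worth making explicit if you write this up: (i) the deduction $x_k/\|x_k\|\to d\implies d\in 0^+F_i$ uses that $qFW$-sets are by definition convex (and closed); (ii) the claim that $F_i+\mathbb{R}d$ is $qFW$ should be justified directly from condition (5) of Theorem \ref{asymptote} --- for any affine $T$ with linear part $T_0$, the set $T(F_i+\mathbb{R}d)=T(F_i)+\mathbb{R}T_0d$ is the preimage under an orthogonal projection of an affine image of $F_i$, hence closed --- rather than via Corollary \ref{product}, whose proof in the paper relies on Proposition \ref{intersection} and would make the argument circular; your parenthetical pointing to the proof of Proposition \ref{prodFW} and condition (5) avoids this, so the logic is sound as stated.
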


\begin{proof}
By Theorem \ref{asymptote} the result follows immediately from
a theorem of Klee \cite[Thm. 4]{Kl60}, which says that
finite intersections of sets without $f$-asymptotes have no $f$-asymptotes.  
\end{proof}

\begin{corollary}
\label{product}{\em
If $F_1,\dots,F_m$ are $qFW$-sets, then the
cartesian product $F_1\times \dots \times F_m$ is again a $qFW$-set.}
\end{corollary}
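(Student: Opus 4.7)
The plan is to realize $F_1\times\cdots\times F_m$ as a finite intersection of $qFW$-sets in a common ambient space and then invoke Proposition \ref{intersection}. Set $n=n_1+\cdots+n_m$ where $F_i\subset\mathbb{R}^{n_i}$, and for each $i$ define
\[
G_i := \mathbb{R}^{n_1}\times\cdots\times\mathbb{R}^{n_{i-1}}\times F_i\times\mathbb{R}^{n_{i+1}}\times\cdots\times\mathbb{R}^{n_m}\ \subset\ \mathbb{R}^n,
\]
so that $F_1\times\cdots\times F_m=\bigcap_{i=1}^m G_i$. By Proposition \ref{intersection} it will suffice to show that each $G_i$ is a $qFW$-set. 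Up to a permutation of coordinate blocks (an orthogonal affine bijection which preserves both convexity and the absence of $f$-asymptotes), this reduces to the following key sub-claim: whenever $F\subset\mathbb{R}^p$ is a $qFW$-set and $q\geq 1$, the set $F\times\mathbb{R}^q$ is a $qFW$-set in $\mathbb{R}^{p+q}$.

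For the sub-claim I would use characterization (4) of Theorem \ref{asymptote}. Clearly $F\times\mathbb{R}^q$ is convex, so it suffices to show it has no $f$-asymptotes. Let $L\subset\mathbb{R}^{p+q}$ be an affine subspace with $\mathrm{dist}(F\times\mathbb{R}^q,L)=0$, and let $\pi:\mathbb{R}^{p+q}\to\mathbb{R}^p$ denote the projection onto the first $p$ coordinates. Then $\pi(L)$ is an affine subspace of $\mathbb{R}^p$, hence closed. From sequences $(x_k,y_k)\in F\times\mathbb{R}^q$ and $(u_k,v_k)\in L$ with $\|(x_k-u_k,y_k-v_k)\|\to 0$ one gets $\|x_k-u_k\|\to 0$ with $x_k\in F$ and $u_k\in\pi(L)$, so $\mathrm{dist}(F,\pi(L))=0$. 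Since $F$ is $qFW$, Theorem \ref{asymptote} produces a point $x\in F\cap\pi(L)$. Lifting $x=\pi(u,v)$ with $(u,v)\in L$ forces $u=x$, whence $(x,v)\in L\cap(F\times\mathbb{R}^q)$; thus $L$ is not an $f$-asymptote.

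The main conceptual obstacle is that the cartesian product is not literally an intersection of the $F_i$, since these live in different ambient spaces. The remedy is the fattening trick above, and the only non-trivial check is that fattening by a Euclidean factor preserves the $qFW$ property. That is exactly the sub-claim, which is painless thanks to the $f$-asymptote characterization from Theorem \ref{asymptote}; Proposition \ref{intersection} then closes the argument.
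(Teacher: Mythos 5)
Your proposal is correct and follows essentially the same route as the paper: write the product as an intersection of ``fattened'' sets $G_i$, verify via the $f$-asymptote characterization of Theorem \ref{asymptote} that $F\times\mathbb{R}^q$ inherits the $qFW$ property (the paper states the contrapositive: an $f$-asymptote of $F_1\times\mathbb{R}^{d_2}$ projects to an $f$-asymptote of $F_1$), and conclude with Proposition \ref{intersection}. Your write-up merely spells out the projection argument in more detail than the paper does.
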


\begin{proof}
Consider for the ease of notation the case of two sets $F_i \subset \mathbb R^{d_i}$, $i=1,2$. 
Then write
$$F_1 \times F_2 =\left( F_1 \times \mathbb R^{d_2}\right) \cap
\left(\mathbb R^{d_1} \times F_2 \right).$$ 
Now $F_1 \times \mathbb R^{d_2}$ is also {\em qFW}, and so
is $\mathbb R^{d_1} \times F_2$, and hence
the result follows from Proposition
\ref{intersection}. The fact that $F_1 \times \mathbb R^{d_1}$ is {\em qFW} is easily seen as follows: If $M$ is a $f$-asymptote of
$F_1 \times \mathbb R^{d_1}$, then $L = \{x: (x,y) \in M \mbox{ for some }y\}$ is a $f$-asymptote of $F_1$.
\end{proof}

\begin{remark}
Example 1 also tells us that the sum of $FW$-sets need not be a $FW$-set even when closed, as follows from the identity 
$F_1 \times F_2 = (F_1 \times \{0\}) + (\{0 \} \times F_2)$. Note that even though $F_1 \times F_2$ fails to be $FW$, it remains $qFW$ due to Corollary \ref{product}.
\end{remark}

\begin{example}
Let $F$ be the epigraph of $f(x)=x^2 + \exp(-x^2)$ in $\mathbb{R}^2$. Then $%
q(x,y)= y-x^2$ is bounded below on $F$, but does not attain its infimum, so $F
$ is not \emph{FW}. However, $F$ has no $f$-asymptotes, so it is \emph{qFW}.
\hfill $\square$
\end{example}

\begin{remark}
In \cite{JCA} it is shown explicitly that the ice-cream cone is not {\em qFW}. Here is a simple synthetic argument.
The ice cream cone $D \subset \mathbb R^3$ can be cut by a plane $L$ in such a way that $F = D \cap L$ has a hyperbola as boundary
curve. Since $F$ has asymptotes, it is not {\em qFW}, hence neither is the cone $D$.
\end{remark}

The method of proof in implication (6) $\implies$ (1) in Theorem \ref{asymptote}  can be used to show that
sub-level sets of convex polynomials are {\em qFW}-sets, see \cite[Chap. II, $\mathsection 4$, Thm. 13]{belusov}. We obtain the following
extension of \cite[Thm. 3]{klatte}:

\begin{corollary}
\label{cor4}
{\em
Let $F_0$ be a $qFW$-set and let
$f_1,\dots,f_m$ be convex polynomials on $F_0$ such that  the set $F=\{x\in F_0: f_i(x) \leq 0, i=1,\dots,m\}$
is non-empty. 
Let
$f$ be a polynomial which is bounded below on $F$ and has at least one nonempty convex sub-level set on $F$.
Then $f$ attains its infimum on $F$.}
\hfill $\square$
\end{corollary}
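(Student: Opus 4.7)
The plan is to show that the feasible set $F$ is itself a $qFW$-set, and then invoke the equivalence $(1)\iff(2)$ in Theorem \ref{asymptote} together with the hypotheses on $f$ (bounded below on $F$, nonempty convex sublevel set on $F$) to conclude that $f$ attains its infimum on $F$.

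To establish that $F$ is $qFW$, I would write
\[
F \;=\; F_0 \,\cap\, S_1 \,\cap\, \dots \,\cap\, S_m,
\qquad S_i := \{x\in\mathbb R^n : f_i(x)\le 0\}.
\]
Each $S_i$ is closed (continuity of $f_i$) and convex (as a sublevel set of a convex function), and the key input is that each $S_i$ is a $qFW$-set; this is the statement alluded to in the remark preceding the corollary, attributed to Belousov \cite{belusov}. The most convenient formulation is via the equivalence $(2)\iff(4)$ in Theorem \ref{asymptote}: one shows that $S_i$ has no $f$-asymptotes. By hypothesis $F_0$ is $qFW$, and so is every $S_i$. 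Since $F$ is nonempty by assumption, Proposition \ref{intersection} immediately gives that $F$ is $qFW$. Applying condition (1) of Theorem \ref{asymptote} then yields the conclusion.

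The main obstacle is justifying that the sublevel set $S_i = \{f_i \le 0\}$ of a convex polynomial has no $f$-asymptotes. I would mimic the induction on dimension carried out in the proof of $(6)\implies(1)$ of Theorem \ref{asymptote}. Suppose for contradiction that $M$ is an affine manifold with $\mathrm{dist}(S_i,M)=0$ but $S_i\cap M=\emptyset$. One may assume $S_i$ has nonempty interior, otherwise one restricts to the affine hull and inducts. Any unbounded sequence in $S_i$ approaching $M$ produces, after suitable normalization, a direction of recession $d$ of $S_i$ along which $f_i$ is bounded above (by $0$) on every ray; by convexity plus the polynomial rigidity exploited in Theorem \ref{asymptote}, $f_i$ must be constant along $d$. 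Projecting orthogonally onto $d^\perp$ reduces the dimension while preserving both the sublevel set structure (since $\widetilde f_i \circ P = f_i$) and the hypothetical $f$-asymptote, contradicting the induction hypothesis.

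Once this lemma on sublevel sets is in hand, the rest of the argument is essentially bookkeeping: $qFW$-ness is closed under finite intersection by Proposition \ref{intersection}, and the hypothesis on $f$ in the corollary matches verbatim clause (1) of Theorem \ref{asymptote}, so the conclusion is immediate.
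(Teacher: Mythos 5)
Your high-level route is exactly the one the paper intends: the corollary carries no separate proof precisely because it is meant to follow from the sentence immediately preceding it (sub-level sets of convex polynomials are $qFW$-sets, a fact the paper delegates to Belousov rather than proving), from Proposition \ref{intersection} (a nonempty finite intersection of $qFW$-sets is $qFW$), and from the equivalence of (1) and (2) in Theorem \ref{asymptote}. Your ``bookkeeping'' part --- writing $F$ as a nonempty intersection of $qFW$-sets and then invoking clause (1) --- is the paper's argument verbatim and is fine.

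The gap is in your sketch of the key lemma, i.e.\ the one ingredient the paper cites instead of proving. You assert that a recession direction $d$ of $S_i=\{f_i\le 0\}$ forces $f_i$ to be ``constant along $d$ by convexity plus the polynomial rigidity exploited in Theorem \ref{asymptote}.'' That rigidity argument needs the polynomial to be bounded \emph{above and below} on the ray (the paper's proof uses $\gamma\le f(x+td)\le\alpha$); here you only have the upper bound $f_i(x+td)\le 0$. A convex one-variable polynomial bounded above on $[0,\infty)$ is affine and non-increasing, not necessarily constant: take $f_i(x)=-x_1$ and $d=e_1$, so that $S_i$ is a half-space and $f_i$ decreases strictly along $d$. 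Then $\widetilde f_i\circ P\ne f_i$ and your projection step collapses. The lemma is nevertheless true and the argument can be repaired: since $t\mapsto f_i(x+td)$ has degree at most $1$ for every $x$ in an open subset of $S_i$, the coefficients of $t^k$ for $k\ge 2$ (polynomials in $x$) vanish identically, so $f_i(x+td)=f_i(x)+a(x)t$ for all $x$; joint convexity of $(x,t)\mapsto f_i(x+td)$ forces $\nabla a\equiv 0$, hence $a$ is a constant, and $a\le 0$ because the values stay $\le 0$ on rays issuing from $S_i$. If $a<0$, note that the same normalization that produced $d$ shows $d$ lies in the direction space of the putative asymptote $M$, so $f_i\to-\infty$ along a ray contained in $M$ and $M$ meets $S_i$ --- a contradiction; only the case $a=0$, genuine constancy, remains, and there your projection and induction go through. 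A smaller point: you declare $S_i=\{x\in\mathbb R^n:f_i(x)\le 0\}$ convex, but the corollary only assumes $f_i$ convex \emph{on} $F_0$; either read the hypothesis as global convexity (as in Belousov--Klatte) or work with the convex sets $\{x\in F_0:f_i(x)\le 0\}$ throughout, since $qFW$ is only defined for convex sets.
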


\begin{remark}
From Corollary \ref{product} and Proposition \ref{intersection} we learn that the class of {\em qFW}-sets is closed under
finite intersections and cross products, while example \ref{example_LZ} tells us that this is no longer true for {\em FW}-sets.
Yet another invariance property of {\em qFW}-sets is the following:
\end{remark}

\begin{corollary}{\em
Let $T:\mathbb R^n \to \mathbb R^m$ be an affine operator, and let $F \subset \mathbb R^m$ be a qFW-set. If
$T^{-1}(F)$ is nonempty, then it is a qFW-set, too.}
\end{corollary}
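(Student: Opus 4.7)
The plan is to use Theorem \ref{asymptote} and characterize $qFW$-sets as convex sets without $f$-asymptotes. Since $T$ is affine and $F$ is convex, the pre-image $T^{-1}(F)$ is automatically convex, so it remains to show that $T^{-1}(F)$ has no $f$-asymptote.

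I would argue by contradiction. Suppose $M \subset \mathbb{R}^n$ is an affine manifold with $M \cap T^{-1}(F) = \emptyset$ and $\mathrm{dist}(T^{-1}(F), M) = 0$. The natural move is to push $M$ forward under $T$: since $T$ is affine, the image $T(M)$ is an affine manifold in $\mathbb{R}^m$, so the notion of $f$-asymptote applies to it.

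Next, choose sequences $x_n \in T^{-1}(F)$ and $m_n \in M$ with $\|x_n - m_n\| \to 0$. Writing $T = T_0 + a$ with $T_0$ linear, one has $\|T(x_n) - T(m_n)\| = \|T_0(x_n - m_n)\| \leq \|T_0\|\, \|x_n - m_n\| \to 0$, and therefore $\mathrm{dist}(F, T(M)) = 0$. Now I would split into two cases. If $F \cap T(M) = \emptyset$, then $T(M)$ is an $f$-asymptote of $F$, contradicting the $qFW$ property of $F$ through Theorem \ref{asymptote}. If instead $F \cap T(M) \neq \emptyset$, pick $y \in F \cap T(M)$ and write $y = T(m)$ for some $m \in M$; then $m \in T^{-1}(F) \cap M$, contradicting the assumption that $M$ is disjoint from $T^{-1}(F)$.

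The main subtle point is setting up the dichotomy cleanly: an $f$-asymptote of the pre-image must, under $T$, either become an $f$-asymptote of $F$ or else genuinely meet $F$, and both alternatives yield contradictions. All the other ingredients --- convexity of $T^{-1}(F)$, the fact that $T(M)$ is again affine, and the propagation of the distance estimate through the linear part of $T$ --- are routine.
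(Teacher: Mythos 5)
Your proof is correct and follows exactly the paper's route: the paper also invokes characterization (4) of Theorem \ref{asymptote} and observes that an $f$-asymptote $M$ of $T^{-1}(F)$ pushes forward to an $f$-asymptote $T(M)$ of $F$. You have merely supplied the details (the distance estimate through the linear part and the dichotomy ruling out $F\cap T(M)\neq\emptyset$) that the paper leaves implicit.
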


\begin{proof}
We use property (4) of Theorem \ref{asymptote}.
Suppose  $T^{-1}(F)$ had an $f$-asymptote $M$, then $T(M)$ would be an $f$-asymptote of $F$.
\end{proof}


\begin{corollary}
{\rm (See \cite{klatte}, \cite[Cor. 27.3.1]{rock}).}  {\em
Let $f$ be a polynomial which  is convex and  bounded below on
a {qFW}-set $F$. Then $f$ attains its infimum on $F$.}
\hfill $\square$
\end{corollary}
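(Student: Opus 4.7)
The plan is to deduce this directly from the equivalence $(1) \Leftrightarrow (2)$ in Theorem \ref{asymptote}. Since $F$ is a $qFW$-set, condition (2) holds, and therefore so does condition (1): every polynomial that is bounded below on $F$ and possesses at least one nonempty convex sub-level set on $F$ attains its infimum on $F$. Thus the only thing to verify is that the given convex polynomial $f$ meets the sub-level set hypothesis of (1).

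To check this, I would fix any $x_0 \in F$ and consider the sub-level set $S_\alpha := \{x \in F : f(x) \leq \alpha\}$ for $\alpha := f(x_0)$. Clearly $x_0 \in S_\alpha$, so $S_\alpha$ is nonempty. Moreover, since $f$ is convex on $F$ and $F$ itself is convex (as required by the definition of $qFW$-set), the set $S_\alpha$ is convex, being a sub-level set of a convex function on a convex domain.

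With this verified, I can apply condition (1) of Theorem \ref{asymptote} to conclude that $f$ attains its infimum on $F$. There is no real obstacle: the corollary is essentially a specialization of the equivalence already established in Theorem \ref{asymptote}, the observation being simply that convexity of $f$ on $F$ automatically supplies the nonempty convex sub-level set needed to activate condition (1).
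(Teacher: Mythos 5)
Your proposal is correct and matches the paper's intent exactly: the corollary is stated with no written proof precisely because it is the specialization of the implication (2) $\Rightarrow$ (1) of Theorem \ref{asymptote}, with the only point to check being that a convex polynomial on the convex set $F$ has a nonempty convex sub-level set, which you verify correctly.
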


The following consequence of Theorem \ref{asymptote}
is surprising.

\begin{corollary}
\label{surprise}
{\em
Let $F$ be a convex cone. Then the following are
equivalent:
\begin{enumerate}
\item[\rm (1)]
 $F$ is a FW-set;
 \item[\rm (2)] $F$ is a qFW-set;
 \item[\rm (3)]  $F$  is polyhedral.
 \end{enumerate}
 }
\end{corollary}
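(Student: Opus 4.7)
The plan is to establish the cyclic chain $(3)\Rightarrow(1)\Rightarrow(2)\Rightarrow(3)$. For $(3)\Rightarrow(1)$ there is nothing new to prove: a polyhedral convex cone is a closed convex polyhedron, so the classical Frank--and--Wolfe theorem applies directly. For $(1)\Rightarrow(2)$, every convex $FW$-set is automatically $qFW$, as already noted in the remark preceding Theorem \ref{asymptote}, since the class of quasi-convex quadratics bounded below on $F$ is contained in the class of all quadratics bounded below on $F$.

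The substantive direction is $(2)\Rightarrow(3)$. Here I would first translate the $qFW$ hypothesis, using the equivalence $(2)\Leftrightarrow(5)$ of Theorem \ref{asymptote}, into the statement that $T(F)$ is closed for every affine mapping $T$. Because $F$ is a convex cone, any translation part can be absorbed into the linear part without affecting closedness of the image, so the hypothesis reduces to: every linear image $T(F)$ of $F$ is closed. Moreover, $F$ itself is closed (this is property $(5)$ applied to the identity, or equivalently the standard observation that $qFW$-sets are closed).

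At this point I would invoke the classical Mirkil--Klee characterization of polyhedral convex cones: a closed convex cone in $\mathbb{R}^n$ is polyhedral if and only if every linear image of it is closed. Applying this theorem to $F$ immediately yields that $F$ is polyhedral, which completes the cycle.

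The main obstacle is the reliance on the Mirkil--Klee characterization theorem, which is itself a nontrivial classical result (Mirkil treated $\mathbb{R}^3$ and Klee extended it); once cited, however, the corollary becomes a brief derivation, and the surprise announced in the statement comes precisely from the fact that the a priori weaker $qFW$ hypothesis already forces polyhedrality, so within the class of convex cones the three notions $FW$, $qFW$, and polyhedral all collapse to the same one.
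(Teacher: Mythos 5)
Your proposal is correct and follows essentially the same route as the paper: the easy implications are handled identically, and the substantive step $(2)\Rightarrow(3)$ is obtained in both cases by converting the $qFW$ hypothesis via Theorem \ref{asymptote} into closedness of projections (the paper uses condition (6), closedness of all two-dimensional orthogonal projections, while you use condition (5)) and then invoking Mirkil's theorem (Lemma \ref{mirkil}) to conclude polyhedrality. The only cosmetic difference is that you cite the Mirkil--Klee characterization in its ``all linear images closed'' form rather than the two-dimensional projection form stated in the paper, which is an equivalent packaging of the same result.
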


\begin{proof}
(1) $\implies$ (2) is clear, because $F$ is convex.
(2) $\implies$ (3):
Let $F\subset \mathbb R^n$ be \emph{qFW}, then by condition (iv) of Theorem \ref{asymptote} 
every orthogonal projection
$P(F)$ on any two-dimensional subspace of $\mathbb R^n$ is closed.  
Therefore, by Mirkil's theorem, which we give as Lemma \ref{mirkil} below,
$F$ is polyhedral. 
(3) $\implies$ (1):
By the classical Frank-and-Wolfe theorem every polyhedral convex cone is \emph{FW}. 
\end{proof}

\begin{lemma}
\label{mirkil}
{(Mirkil's theorem \cite{mirkil})}. {\em Let $D$ be a convex cone in $\mathbb R^n$
such that every orthogonal projection on any two-dimensional subspace is closed.
Then $D$ is polyhedral.}
\hfill $\square$
\end{lemma}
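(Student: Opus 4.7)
The plan is induction on the ambient dimension $n$, with the base case $n \leq 2$ being immediate: the hypothesis (applied with $L$ the ambient $\mathbb{R}^2$ itself when $n=2$) forces $D$ to be closed, and every closed convex cone in $\mathbb{R}^{2}$ is polyhedral, since there are only a handful of shapes up to congruence.

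For the inductive step I would first sharpen the hypothesis using the standard duality for projections of convex cones,
\[
(P_{L}(D))^{\circ}_{L} \;=\; D^{\circ}\cap L,
\]
valid inside any linear subspace $L$. When $\dim L=2$, the right-hand side is a closed convex cone sitting in a two-dimensional ambient space, hence automatically polyhedral; combined with the bipolar theorem inside $L$ and the assumed closedness of $P_{L}(D)$, this forces $P_{L}(D)$ itself to be polyhedral with at most two extreme rays. So the hypothesis is promoted from ``every 2D projection is closed'' to ``every 2D projection is polyhedral.''

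Next I would pass to hyperplane projections. For any hyperplane $H$ through the origin and any 2D subspace $L\subset H$, the orthogonality identity $P_{L}\circ P_{H}=P_{L}$ gives $P_{L}(P_{H}(D))=P_{L}(D)$, which is closed. Hence $P_{H}(D)\subset H\cong\mathbb{R}^{n-1}$ inherits the inductive hypothesis and, once known to be closed, is polyhedral by the induction. Dualising, $D^{\circ}\cap H$ is polyhedral for every hyperplane $H$, and a Minkowski--Weyl type assembly of finitely many such sections would then recover $D^{\circ}$ (and hence $D$) as a polyhedral cone.

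The main obstacle is establishing closedness of the hyperplane projection $P_{H}(D)$ from a hypothesis phrased purely in terms of 2D projections; the bare identity $(P_{H}(D))^{\circ}_{H}=D^{\circ}\cap H$ does not give closedness for free. My plan for this delicate step is a compactness argument on the unit sphere: a sequence $x_{k}\in D$ with $P_{H}(x_{k})\to y\notin P_{H}(D)$ is normalised, and a limiting recession direction of $D$ is extracted via Bolzano--Weierstrass; projecting carefully into a 2D subspace that simultaneously captures $y$ and this recession direction should contradict the polyhedrality (finitely many extreme rays!) of that 2D projection established in the first step. This is the heart of Mirkil's original argument and the only place where the full force of the 2D hypothesis, rather than mere closedness of $D$, is used.
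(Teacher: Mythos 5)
First, a point of reference: the paper does not prove this lemma at all --- it is quoted as Mirkil's theorem with a citation to \cite{mirkil} and used as a black box --- so there is no in-paper argument to compare against; your proposal has to stand on its own. The early stages of your plan are fine, if slightly over-engineered: the base case is correct, and the promotion ``every 2D projection is closed $\Rightarrow$ every 2D projection is polyhedral'' needs no duality, since a closed convex cone in a two-dimensional space is automatically polyhedral. The inductive step producing polyhedrality of every hyperplane projection $P_H(D)$ is also sound, and in fact the ``delicate step'' you isolate at the end is a non-issue: the statement being proved by induction assumes only that all two-dimensional projections of the cone are closed (not that the cone itself is closed), and $P_L\circ P_H=P_L$ for $L\subset H$ hands this hypothesis directly to $P_H(D)$. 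Closedness of $P_H(D)$ is then a \emph{consequence} of its polyhedrality, not a prerequisite, so the compactness argument on the sphere you sketch is solving a problem you do not have.

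The genuine gap is the step you dispatch in one clause: ``a Minkowski--Weyl type assembly of finitely many such sections would then recover $D^{\circ}$ as a polyhedral cone.'' Knowing that $D^{\circ}\cap H$ is polyhedral for every hyperplane $H$ does \emph{not} imply that $D^{\circ}$ is polyhedral. The ice cream cone in $\mathbb{R}^{3}$ (self-polar up to a reflection) is the standard counterexample: its intersection with any plane through the origin is $\{0\}$, a single ray, or a closed planar sector --- polyhedral in every case --- yet the cone is not polyhedral. Passing from the projections $P_H(D)$ to the sections $D^{\circ}\cap H=(P_H(D))^{\circ}_{H}$ discards exactly the information that separates the ice cream cone from a polyhedral cone, so the dualised statement is too weak to close the induction. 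What you actually need is the implication ``every hyperplane projection of a convex cone is polyhedral $\Rightarrow$ the cone is polyhedral,'' and that is essentially the full content of Mirkil's theorem; your proposal offers no argument for it. Any correct proof must at some point exploit closedness of the two-dimensional projections in a way that fails for the ice cream cone (for which the projections onto planes orthogonal to its boundary rays are not closed), and that mechanism is missing here.
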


\begin{remark}
This result puts an end to hopes to get new results for the linear complementarity problem
by investigating $FW$-cones. 
\end{remark}

We end this section with a nice consequence of Mirkil's theorem.
First we need the following characterization of
$f$-asymptotes:

\begin{proposition}
\label{f-asympt}  {\em For a closed convex set $F$ and a linear subspace $L,$ the
following statements are equivalent:

{\rm 1)} No translate of $L$ is an f-asymptote of $F.$

{\rm 2)} The orthogonal projection of $F$ onto the orthogonal complement $L^\perp$ is
closed.

{\rm 3)}   $F+L$ is closed.
}
\end{proposition}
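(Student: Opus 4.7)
The plan is to introduce the orthogonal projection $P$ onto $L^\perp$ (so that $L=\ker P$) and translate each of the three statements into a property of $P(F)$ viewed as a subset of $L^\perp$. I would first establish $(2)\Leftrightarrow(3)$ and then $(1)\Leftrightarrow(2)$, giving the full cycle.

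For $(2)\Leftrightarrow(3)$, the key observation is the identity $F+L = P(F)+L$. Indeed, any $x\in F$ splits as $x=P(x)+(x-P(x))$ with $x-P(x)\in L$, which gives $F\subset P(F)+L$, and the reverse inclusion $P(F)\subset F+L$ is proved by the same decomposition. Under the orthogonal splitting $\mathbb{R}^n = L^\perp\oplus L$, the set $P(F)+L$ corresponds to $P(F)\times L$, and therefore it is closed in $\mathbb{R}^n$ if and only if $P(F)$ is closed in $L^\perp$. This proves the equivalence.

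For $(1)\Leftrightarrow(2)$, I would first reduce to translates of the form $y+L$ with $y\in L^\perp$, which is possible since $y+L = P(y)+L$. For any $x\in F$, the distance from $x$ to $y+L$ equals $\|P(x)-y\|$, because $y-P(x)\in L^\perp$ is itself the nearest-point offset. Hence $\mathrm{dist}(F,y+L) = \mathrm{dist}(P(F),\{y\})$ in $L^\perp$, which vanishes exactly when $y\in\overline{P(F)}$. Moreover, $(y+L)\cap F\neq\emptyset$ iff $y\in P(F)$, since any $x\in(y+L)\cap F$ satisfies $P(x)=y$ and the converse is immediate. Combining these two observations, $y+L$ is an $f$-asymptote of $F$ if and only if $y\in\overline{P(F)}\setminus P(F)$. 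Thus (1) — the nonexistence of such a translate — is precisely the condition $\overline{P(F)}=P(F)$, i.e., (2).

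I do not anticipate any real obstacle: once the projection $P$ is in place, everything reduces to the simple dictionary between translates of $L$ and points of $L^\perp$. It is worth noting that neither closedness nor convexity of $F$ is actually used in the argument; the proposition is really a geometric fact about projections and orthogonal complements.
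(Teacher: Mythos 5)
Your proof is correct and follows essentially the same route as the paper's: both arguments hinge on the orthogonal projection $P$ onto $L^\perp$, the identity $P(F)=(F+L)\cap L^\perp$ (equivalently $F+L=P(F)+L$), and the equality $\mathrm{dist}(F,y+L)=\mathrm{dist}(P(F),\{y\})$. The only differences are organizational — you prove two bidirectional equivalences where the paper runs the cycle $1)\Rightarrow 2)\Rightarrow 3)\Rightarrow 1)$ — and your closing remark that closedness and convexity of $F$ are never used is a fair observation that applies to the paper's argument as well.
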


\begin{proof}
1) $\Rightarrow $ 2). Let $x\in \overline{P_{L^{\perp }}\left( F\right)}.$ Since $%
P_{L^{\perp }}^{-1}\left( x\right) =x+L,\ $we can easily prove that 
dist$\left( F,x+L\right) ={\rm dist}\left( P_{L^{\perp }}\left( F\right) ,x\right)
=0$.  Since $x+L$ is not an $f$-asymptote of $F,$ we have $F\cap \left(
x+L\right) \neq \emptyset ,$ which amounts to saying that $x\in P_{L^{\perp
}}\left( F\right) .$

2) $\Rightarrow $ 3). Let $x_{k}\in F$ and $y_{k}\in L$ $\left(
k=1,2,...\right) $ be such that the sequence $x_{k}+y_{k}$ converges to some
point $z.$ Then $P_{L^\perp}(z)=\lim P_{L^{\perp }}\left( x_{k}+y_{k}\right) =\lim P_{L^{\perp
}}\left( x_{k}\right) \in P_{L^{\perp }}\left( F\right)$ 
 due to closedness of $P_{L^\perp}(F)$.  But $P_{L^\perp}(F) =\left( F+L\right)
\cap L^{\perp }\subset F+L$, hence $P_{L^\perp}(z) \in F+L$. Now 
 $z = P_{L^\perp}(z) + P_L(z) \in F +L+L = F+L$.

3) $\Rightarrow $ 1). Let as assume that $x+L$ is an $f$-asymptote of $F$ for
some $x.$ Then $0\leq {\rm dist}\left( x,F+L\right) \leq {\rm dist}\left( x,\left(
F+L\right) \cap L^{\perp }\right) ={\rm dist}\left( x,P_{L^{\perp }}\left(
F\right) \right) ={\rm dist}\left( F,x+L\right) =0,$ hence ${\rm dist}\left(
x,F+L\right) =0$.   Since $F+L$ is closed, this implies $x\in F+L.$ This
is equivalent to saying that $F\cap \left( x+L\right) \neq \emptyset ,$ a
contradiction to the assumption that $x+L$ is an $f$-asymptote of $F.$
\end{proof}

The  consequence of Mirkil's Theorem we have in mind is the following:

\begin{proposition}
{\em
For a closed convex cone $D$ in $\mathbb R^{n}$ (with $n>2$),  the following statements
are equivalent:

{\rm 1)}  $D$ is polyhedral.

{\rm 2)}  $C+D$ is a convex polyhedron for every convex polyhedron $C$.

{\rm 3)}  $L+D$  is closed for every $(n-2)$-dimensional subspace $L$.

{\rm 4)}  $D$ has no $(n-2)$-dimensional f-asymptotes.
}
\end{proposition}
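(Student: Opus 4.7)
The plan is to establish the cycle $(1)\Rightarrow(2)\Rightarrow(3)\Rightarrow(4)\Rightarrow(1)$, with the last implication being the substantive one that actually invokes Mirkil's theorem through Proposition~\ref{f-asympt}.

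The implication $(1)\Rightarrow(2)$ is a classical consequence of the Minkowski--Weyl theorem: the Minkowski sum of two convex polyhedra is again a convex polyhedron. (Write $C = K + C'$ with $K$ a polytope and $C'$ a polyhedral cone; then $C+D = K + (C'+D)$, and $C'+D$ is a polyhedral cone as the sum of two polyhedral cones.) For $(2)\Rightarrow(3)$, simply take $C = L$: any linear subspace is a polyhedron, so $L+D$ is a polyhedron, hence closed. For $(3)\Rightarrow(4)$, suppose $M$ were an $(n-2)$-dimensional $f$-asymptote of $D$; writing $M = x+L$ with $L$ an $(n-2)$-dimensional subspace, the contrapositive of $3)\Rightarrow 1)$ in Proposition~\ref{f-asympt} forces $L+D$ to fail closedness, contradicting $(3)$.

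The genuine content lies in $(4)\Rightarrow(1)$. The idea is to feed the hypothesis into Mirkil's theorem (Lemma~\ref{mirkil}), which requires that every orthogonal projection of $D$ onto a two-dimensional subspace be closed. Given an arbitrary two-dimensional subspace $V \subset \mathbb R^n$, set $L = V^\perp$; then $L$ is an $(n-2)$-dimensional subspace, and $P_V = P_{L^\perp}$. By assumption $(4)$, no translate of $L$ is an $f$-asymptote of $D$, so the equivalence $1)\Leftrightarrow 2)$ of Proposition~\ref{f-asympt} gives that $P_{L^\perp}(D) = P_V(D)$ is closed. Since $V$ was arbitrary, Mirkil's theorem yields that $D$ is polyhedral.

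The main obstacle in this proof is really absorbed into the two external tools we may invoke: Mirkil's theorem bridges ``closedness of all $2$-dimensional projections'' with ``polyhedrality,'' while Proposition~\ref{f-asympt} converts projection closedness into the language of $f$-asymptotes. With both available, the remaining work is the essentially mechanical matching of dimensions ($\dim L = n-2$ corresponds to $\dim V = 2$) and verification that the other three implications follow from standard polyhedral calculus.
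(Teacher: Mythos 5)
Your proof is correct and uses exactly the same two tools as the paper (Proposition~\ref{f-asympt} and Mirkil's theorem, Lemma~\ref{mirkil}); the only difference is organizational, since you close a single cycle $1)\Rightarrow 2)\Rightarrow 3)\Rightarrow 4)\Rightarrow 1)$ reaching Mirkil via the equivalence $1)\Leftrightarrow 2)$ of Proposition~\ref{f-asympt}, whereas the paper proves $3)\Rightarrow 1)$ via its implication $3)\Rightarrow 2)$ and then handles $3)\Leftrightarrow 4)$ separately. This is essentially the same argument and is complete as written.
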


\begin{proof}
Implications 1) $\Rightarrow $ 2) $\Rightarrow $ 3) are immediate.
Implication 3) $\Longrightarrow $ 1) is a consequence of 3) $\Rightarrow $
2) of Proposition \ref{f-asympt} combined with Mirkil's Theorem. Implication
3) $\Longrightarrow $ 4) follows from 3) $\Longrightarrow $ 1) of
Proposition \ref{f-asympt}. Finally, implication 4) $\Longrightarrow $ 3)
can be easily derived from implication 1) $\Longrightarrow $ 3) of
Proposition \ref{f-asympt}.
\end{proof}

\section{Motzkin type sets}
\label{sec:Motzkin}

Following \cite{goberna,IMT14}, 
a convex set $F$ is called Motzkin decomposable,
if it may be written as the Minkowski sum of a compact convex set $C$
and a closed convex cone $D$, that is, $F = C + D$.  Motzkin's classical result states that every
convex polyhedron 
has such a decomposition. We extend this
definition as follows:

\begin{definition}
A  closed set $F\subset \mathbb R^n$ is called a {\em Motzkin set}, for short an {\em M}-set,  if it can be written as $F = K + D$, where
$K$ is a compact set, and $D$ is a closed convex cone.
\end{definition}

We shall continue to reserve the term Motzkin decomposable for the case where
the set $F$ is convex. A Motzkin set $F$ which is convex is then clearly Motzkin decomposable.

\begin{remark}
Let $F = K + D$ be a Motzkin set, then similarly to the convex case
$D$ is uniquely determined by $F$. Indeed, taking convex hulls, we have co$(F) = {\rm co}(K)+{\rm co}(D) = {\rm co}(K)+D$, hence
${\rm co}(F)$ is a convex Motzkin set, i.e., a Motzkin decomposable set. Then from known results
on Motzkin decomposable sets \cite{goberna,IMT14}, $D = 0^+{\rm co}(F)$, the recession cone of co$(F)$. 
Now if we  define the recession cone of $F$ in the same way as in the convex case, i.e., 
$0^+F = \{u \in \mathbb R^n: x + tu\in F \mbox{ for all $x\in F$ and all $t\geq 0$}\}$, then
$0^+F \subset 0^+{\rm co}(F) = D \subset 0^+F$, proving $D = 0^+F$. 
In particular, $F$ and co$(F)$ have the same recession cone.
\end{remark}

\begin{theorem}
\label{kummer}
{\em
Let $F$ be a Motzkin set in $\mathbb R^n$, represented as  $F = K+D = K + 0^+F$.
Then the following are equivalent:
\begin{enumerate}
\item[\rm (1)] $F$ is a $FW$-set.
\item[\rm (2)] The recession cone $0^+F$ of $F$ is polyhedral.
\item[\rm (3)] $F$ has no $f$-asymptotes.
\end{enumerate}
}
\end{theorem}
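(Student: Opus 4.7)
The plan is to establish the cyclic chain $(1)\Rightarrow(3)\Rightarrow(2)\Rightarrow(1)$. The implication $(1)\Rightarrow(3)$ is immediate from Proposition \ref{FWnof}: its proof uses only the convex quadratic $f(x)=\|Ax-b\|^2$ and never appeals to convexity of $F$, so it carries over verbatim to any Motzkin set that is \emph{FW}.

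For $(2)\Rightarrow(1)$, Kummer's direction, I would argue directly on a minimizing sequence. Given a quadratic $q$ bounded below on $F=K+D$ with infimum $\gamma$, take $x_n=k_n+d_n\in F$ with $q(x_n)\to\gamma$. Compactness of $K$ yields $k_n\to k^*\in K$ after extracting a subsequence. If $\{d_n\}$ is bounded, then $d_n\to d^*\in D$ since $D$ is closed, and $k^*+d^*$ attains $\gamma$. Otherwise, normalize $u_n=d_n/\|d_n\|\to u\in D\setminus\{0\}$, and a routine expansion of $q(x_n)$ in powers of $\|d_n\|$ gives $u^\ttop A u=0$ and $\nabla q(k^*)^\ttop u=0$, so $q$ is constant along the ray $k^*+\mathbb R_+u$. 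Using polyhedrality of $D$, I would then project $F$ orthogonally onto $u^{\perp}$, preserving both a compact summand and a polyhedral recession cone, and conclude by induction on dimension with the classical Frank--Wolfe theorem on a polyhedron as the base case.

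For the novel direction $(3)\Rightarrow(2)$, I argue by contradiction. Suppose $D=0^+F$ is not polyhedral. Then Corollary \ref{surprise} says $D$ is not \emph{qFW}, and Lemma \ref{mirkil} supplies a two-dimensional subspace $V$ for which $\tilde D:=P_V(D)$ fails to be closed; put $\tilde K=P_V(K)$ and $L=V^{\perp}$. The proof of Proposition \ref{f-asympt} never actually uses convexity of $F$ and therefore applies to our closed set $F$; it reduces the task to showing that $\tilde K+\tilde D=P_V(F)$ is not closed in $V$. Here I exploit the $2$-dimensionality: every closed convex cone in $\mathbb R^2$ is polyhedral, so $\overline{\tilde D}\setminus\tilde D$ lies in the relative boundary of $\overline{\tilde D}$ and, being invariant under positive scaling, contains an unbounded open ray $\{\lambda y:\lambda>0\}$ for some $y\neq 0$. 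I would choose $\tilde k_0\in\tilde K$ extremal with respect to a supporting linear functional of $\overline{\tilde D}$ whose zero set contains the missing ray, and then check that $\tilde k_0+\lambda y$ belongs to $\overline{\tilde K+\tilde D}=\tilde K+\overline{\tilde D}$ for every $\lambda>0$, while for $\lambda$ large it cannot belong to $\tilde K+\tilde D$, since any decomposition $\tilde k_0+\lambda y=\tilde k'+\tilde d$ would, after cone scaling, force $y-(\tilde k'-\tilde k_0)/\lambda\in\tilde D$ with the correction term lying, by the extremality of $\tilde k_0$, on the forbidden side of $\overline{\tilde D}$.

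I expect the $2$-dimensional sub-claim to be the main obstacle. The geometric intuition that a bounded compact set $\tilde K$ cannot fill in an unbounded missing boundary ray of $\tilde D$ is clear, but a clean argument requires picking $\tilde k_0$ correctly and carefully tracking which small perturbations of $y$ land in $\tilde D$ versus those that do not; all remaining steps are either immediate or rest on standard polyhedral and recession-cone manipulations.
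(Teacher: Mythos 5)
Your implication (1)$\Rightarrow$(3) matches the paper, and your contrapositive route for (3)$\Rightarrow$(2) is viable in outline; but the ``$2$-dimensional sub-claim'' you identify as the main obstacle is exactly the order cancellation law (Lemma \ref{cancellation}): if $\tilde K+\tilde D$ were closed it would equal $\overline{\tilde K+\tilde D}=\tilde K+\overline{\tilde D}$, and cancelling the compact summand $\tilde K$ gives $\overline{\tilde D}\subset\tilde D$, a contradiction. The paper uses precisely this lemma together with Mirkil's theorem for (1)$\Rightarrow$(2) and (3)$\Rightarrow$(2), so your extremal-point construction --- which would also have to cope with a possibly non-convex $\tilde K$, since a Motzkin set only requires $K$ compact --- can be replaced by one line. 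Also, invoking Corollary \ref{surprise} is superfluous there; Mirkil alone gives you the non-closed two-dimensional projection of $D$.

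The genuine gap is in (2)$\Rightarrow$(1). From a minimizing sequence $x_n=k_n+d_n$ with $\|d_n\|\to\infty$ you can indeed extract $u\in D$ with $u^{\ttop}Au=0$ and $\nabla q(k^*)^{\ttop}u=0$, but this makes $q$ constant only along the single ray $k^*+\mathbb R_+u$. At every other $x\in F$ all that below-boundedness of the affine function $t\mapsto q(x+tu)$ gives you is $\nabla q(x)^{\ttop}u\ge 0$, i.e.\ $q$ is merely non-decreasing in the direction $u$ on $F$. Consequently projecting onto $u^{\perp}$ does \emph{not} yield an equivalent lower-dimensional quadratic program: the infimum over a fibre $\{s: y+su\in F\}$ is attained at an endpoint $s_0(y)$ of that fibre, and $y\mapsto q\left(y+s_0(y)u\right)$ is not a quadratic function of $y$. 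Contrast this with the proof of Theorem \ref{asymptote}, (6)$\Rightarrow$(1), where the dimension reduction is legitimate because the ray stays inside a sub-level set, so the polynomial is bounded above \emph{and} below along it and hence constant in that direction on all of $\mathbb R^n$; here you only have the lower bound. This is exactly the difficulty the paper's proof is built to circumvent: it writes $\inf_F q=\inf_{y\in K}\left(q(y)+f(y)\right)$ with $f(y)=\inf_{z\in D}y^{\ttop}Az+q(z)$, applies the classical Frank--Wolfe theorem to each inner problem, and then proves the real technical point (Lemma \ref{FW2}) that $\mathrm{dom}(f)$ is a polyhedral cone and $f$ is continuous relative to it, so that Weierstrass applies on the compact $K$. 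Some such parametric continuity statement --- or the classical induction on the number of generators of $D$ rather than on the ambient dimension --- is what your sketch is missing, and without it the argument does not close.
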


\begin{proof}
We prove (1) $\implies$ (2).
Let $P$ be an orthogonal projection of 
$\mathbb R^n$ onto a  subspace $L$ of $\mathbb R^n$.
Since $F = K+ D$ is a $FW$-set, $P(F)$ is closed. 
Since $P(F) = P(K) + P(D)$ and $\overline{P(F)} = P(K) + \overline{P(D)}$, this
means
$P(K) + P(D) = P(K)+ \overline{P(D)}$. 
We have to show that this implies $P(D)=\overline{P(D)}$.
This follows from the so-called {\em order cancellation law}, which we give 
as Lemma \ref{cancellation} below. It is
applied to the convex sets $A=\overline{P(D)}$, $B = P(D)$, and for the compact set $P(K)$.
This shows indeed $\overline{P(D)} = P(D)$. This means every
projection of $D$ is closed, hence by Mirkil's theorem (Lemma \ref{mirkil}), the cone
$D$ is polyhedral.

\begin{lemma}
\label{cancellation}
{\rm (Order cancellation law, see \cite{IMT14})}. {\em Let $A,B\subset \mathbb R^n$ be convex sets,
$K\subset \mathbb R^n$ a compact set. If $A+K \subset B+K$, then $A \subset B$.}
\hfill $\square$
\end{lemma}

Let us now prove (2) $\implies$ (1). 
Write $F = K+ D$ for $K$ compact and $D$ a polyhedral convex cone. Now consider a quadratic
function $q(x)=\frac{1}{2}x^\mathsf{T} Ax + b^\mathsf{T} x$ bounded below by 
$\gamma$ on $F$. Hence 
\begin{equation}  \label{inf_inf}
\inf_{x\in F} q(x) = \inf_{y\in K} \inf_{z \in D} q(y+z) =\inf_{y\in K}
\left( q(y) + \inf_{z \in D} y^\mathsf{T} Az + q(z)\right) \geq \gamma.
\end{equation}
Observe that for fixed $y\in K$ the function $q_y:z \mapsto y^\mathsf{T}
Az + q(z)$ is bounded below on $D$ by $\eta = \gamma - \max_{y^{\prime
}\in C} q(y^{\prime })$. Indeed, for $z\in D$ we have 
\begin{align*}
y^\mathsf{T} Az + q(z) &\geq \left( q(y) + \inf_{z^{\prime}\in  D} y^\mathsf{T}
Az^{\prime }+ q(z^{\prime })\right) - q(y) \\
&\geq \inf_{y\in K} \left( q(y) + \inf_{z^{\prime}\in D} y^\mathsf{T}
Az^{\prime }+q(z^{\prime })\right) - \max_{y^{\prime }\in K} q(y^{\prime })
\\
& \geq \gamma - \max_{y^{\prime }\in K} q(y^{\prime })=\eta.
\end{align*}
Since $q_y$ is a quadratic function bounded below on the polyhedral cone $%
D$, the inner infimum is attained at some $z = z(y)$. This is in fact the
classical Frank and Wolfe theorem on a polyhedral cone. In consequence the
function $f:\mathbb{R}^n \to \mathbb{R }\cup\{-\infty\}$ defined as 
\begin{equation*}
f(y) = \inf_{z\in D} y^\mathsf{T} Az +q(z), 
\end{equation*}
satisfies $f(y) = y^\mathsf{T} Az(y) + q(z(y))>-\infty$ for every $y\in K$, so the
compact set $K$ is contained in the domain of $f$. But now a stronger result
holds, which one could call a parametric Frank and Wolfe theorem, and which
we shall prove in Lemma \ref{FW2} below. We show that $f$ is continuous relative to
its domain. Once this is proved, the infimum (\ref{inf_inf}) can then be
written as 
\begin{equation*}
\inf_{x\in F} q(x) = \inf_{y\in K} q(y) + f(y), 
\end{equation*}
and this is now attained by the Weierstrass extreme value theorem due to the
continuity of $q+f$ on the compact $K$. Continuity of $f$ on $K$ is now a consequence of the following
\end{proof}

\begin{lemma}
\label{FW2} {\em Let $D$ be a polyhedral convex cone and define 
\begin{equation*}
f(c)=\inf_{x\in D}c^{\mathsf{T}}x+\textstyle\frac{1}{2}x^{\mathsf{T}}Gx, 
\end{equation*}%
where $G=G^{\mathsf{T}}$. Then $\mathrm{dom}(f)$ is a polyhedral convex
cone, and $f$ is continuous relative to $\mathrm{dom}(f)$.}
\end{lemma}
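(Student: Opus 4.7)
The plan is to prove the two claims of Lemma~\ref{FW2}---polyhedrality of $\mathrm{dom}(f)$ and continuity of $f$ on this domain---in turn, using a recession-style analysis exploiting $0^+D = D$. The starting observation is that since $D$ is a polyhedral cone, $q_c(x) := c^\mathsf{T} x + \tfrac12 x^\mathsf{T} G x$ is bounded below on $D$ if and only if every restriction $t \mapsto q_c(x+td)$ with $x,d\in D$ is bounded below on $[0,\infty)$. Expanding yields two conditions: (a)~$d^\mathsf{T} Gd \geq 0$ for every $d\in D$, independent of $c$, and (b)~$c^\mathsf{T} d + x^\mathsf{T} Gd \geq 0$ for every $x\in D$ and every $d\in D_0:=\{d\in D:d^\mathsf{T} Gd=0\}$. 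Letting $x$ range over the cone $D$, (b) decouples into the $c$-free membership $Gd\in D^\circ$ and the linear inequality $c^\mathsf{T} d\geq 0$, so $\mathrm{dom}(f)=D_0^\circ$ whenever non-empty.

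To see $D_0^\circ$ is polyhedral I proceed face by face. For $d\in D_0\cap\mathrm{relint}(F)$, where $F$ is the unique face of $D$ containing $d$ in its relative interior, $d$ is a global minimizer of $q_0(x):=x^\mathsf{T} Gx$ on $D$, so the first-order optimality condition forces $Gd\perp\mathrm{lin}(F)$; conversely, any $d\in F\cap L_F$ with $L_F:=\{d:Gd\perp\mathrm{lin}(F)\}$ satisfies $d^\mathsf{T} Gd=0$. Hence $D_0=\bigcup_F(F\cap L_F)$ is a finite union of polyhedral cones, so $D_0^\circ=\bigcap_F(F\cap L_F)^\circ$ is a finite intersection of polyhedral cones, hence polyhedral.

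For continuity I reduce to $D=\mathbb{R}_+^k$ by writing $D=\mathrm{cone}(v_1,\ldots,v_k)$ and substituting $x=M\lambda$ with $M$ the $n\times k$ matrix whose columns are $v_1,\ldots,v_k$; this replaces $(c,G)$ by $(M^\mathsf{T} c, M^\mathsf{T} GM)$. On the non-negative orthant, for each candidate active set $J\subseteq\{1,\ldots,k\}$ the KKT system $G_{JJ}\lambda_J=-c_J$, $\lambda_J\geq 0$, $c_{J^c}+G_{J^cJ}\lambda_J\geq 0$ defines, after eliminating $\lambda_J$, a polyhedral cone $P_J$ in $c$-space on which $f$ agrees with a quadratic $f_J$. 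The finitely many $P_J$ cover $\mathrm{dom}(f)$, and at a shared boundary point of $P_J$ and $P_{J'}$ both $f_J$ and $f_{J'}$ realize the optimum, so they coincide there, delivering continuity. The main obstacle is the polyhedrality argument, since $D_0$ itself need not be convex---e.g.\ with $G_{11}=G_{22}=0$ and $G_{12}=G_{21}=1$ on $D=\mathbb{R}_+^2$, $D_0$ equals the union of the two positive axes---so polar duality cannot be applied to $D_0$ directly, and the face-by-face KKT identification of $D_0$ as a finite union of polyhedral cones becomes the essential structural input.
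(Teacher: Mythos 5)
Your identification of the domain, $\mathrm{dom}(f)=\{c:c^{\mathsf{T}}d\geq 0\ \text{for all}\ d\in D_0\}$ with $D_0=\{d\in D:d^{\mathsf{T}}Gd=0\}$, is exactly the paper's formula, and your face-by-face decomposition $D_0=\bigcup_F (F\cap L_F)$ followed by polarity is essentially the paper's part 2 (there $D$ is pulled back to an orthant and the pieces are indexed by active sets $I$ instead of faces, but it is the same mechanism). The gap is in your ``starting observation'': that boundedness below of $q_c$ on every ray $x+\mathbb{R}_{+}d$ with $x,d\in D$ implies boundedness below on $D$. This is not an observation; it is Eaves' characterization of quadratics bounded below on polyhedra, and it is precisely what the paper spends most of its proof establishing, by induction on the number $l$ of generators of $D$: a minimizing sequence $x_k$ with $\Vert x_k\Vert\to\infty$ is normalized to produce $y\in D_0$ with $c^{\mathsf{T}}y=0$ and $Gy$ in the dual cone, and $D$ is then rewritten as $\bigcup_{i}\widehat D_i$ with $\widehat D_i$ generated by $(E\setminus\{e_i\})\cup\{y\}$ so that the induction hypothesis yields a finite lower bound. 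If you mean to quote Eaves, say so explicitly; as written, the central difficulty of the lemma is assumed rather than proved.

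The continuity argument also contains a genuine error: for indefinite copositive $G$ the quadratics $f_J$ do \emph{not} coincide where the cones $P_J$ overlap, because a KKT point with support $J$ need not be a global minimizer. Take $k=2$, $G=\left(\begin{smallmatrix}1&2\\2&1\end{smallmatrix}\right)$, $c=(-1,-1)$: then $c$ lies in the interior of both $P_{\{1\}}$ and $P_{\{1,2\}}$, with $f_{\{1\}}(c)=-\tfrac12=f(c)$ but $f_{\{1,2\}}(c)=-\tfrac13$, so the pieces disagree on a set with nonempty interior and your pasting step fails. What survives is only $f(c)=\min\{f_J(c):c\in P_J\}$ on $\mathrm{dom}(f)$, which gives lower semicontinuity (a finite minimum of functions, each continuous on a closed polyhedral piece); to conclude you must add that $f$, being an infimum of affine functions of $c$, is concave and upper semicontinuous. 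That last observation, combined with polyhedrality of $\mathrm{dom}(f)$ and Rockafellar's Theorem 10.2, is in fact the paper's entire continuity argument, with no active-set analysis needed.
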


\begin{proof}
If $x^{\mathsf{T}}Gx<0$ for some $x\in D,$ then $\mathrm{dom}(f)=\emptyset ,$
so we may assume for the remainder of the proof  that $x^{\mathsf{T}}Gx\geq 0$ for every $x\in D.$   
The proof is now divided into three parts. In part 1) we establish a formula for the domain
dom$(f)$. In part 2) we use this formula to show that dom$(f)$ is polyhedral, and in part 3) we show that the latter implies
continuity of $f$ relative to dom$(f)$. 
\\

1) We start by proving that %
\begin{equation}
\label{domain}
\mathrm{dom}(f)=\left\{ c:c^{\mathsf{T}}x\geq 0\text{ for every }x\in D\text{
such that }x^{\mathsf{T}}Gx=0\right\} . 
\end{equation}%
The inclusion $\subseteq $ being obvious,  we  have to prove  the
following implication:%
\begin{equation*}
c^{\mathsf{T}}x\geq 0\text{ for every }x\in D\text{ such that }x^{\mathsf{T}%
}Gx=0\Longrightarrow \inf_{x\in D}c^{\mathsf{T}}x+\textstyle\frac{1}{2}x^{%
\mathsf{T}}Gx>-\infty .
\end{equation*}%
We establish this by induction on the number $l$ of generators of $D.$ The
case $l=1$ being clear, let  $l>1$, and suppose the  implication is correct for every polyhedral convex cone $D'$ 
with $l' < l$ generators.
Let $c$ be such that $c^{\mathsf{T}%
}x\geq 0$ for every $x\in D$ having $x^{\mathsf{T}}Gx=0.$  
We have to show that $c\in {\rm dom}(f)$. 
Assume on the contrary that
\begin{equation}
\inf_{x\in D}c^{\mathsf{T}}x+\textstyle\frac{1}{2}x^{\mathsf{T}}Gx=-\infty,
\label{unbounded}
\end{equation}
and choose a sequence $x_{k}\in D$ with $\|x_k\|\to \infty$ such that%
\begin{equation}
c^{\mathsf{T}}x_{k}+\textstyle\frac{1}{2}x_{k}^{\mathsf{T}%
}Gx_{k}\longrightarrow -\infty .  \label{seq -infty}
\end{equation}%
Passing to a subsequence, we can assume that the sequence $y_{k}={x_{k}}/{\left\Vert x_{k}\right\Vert }$
converges to some $y\in D.$ We must have $y^{\mathsf{T}}Gy=0,$ as
otherwise we would have $c^{\mathsf{T}}x_{k}+\textstyle\frac{1}{2}x_{k}^{%
\mathsf{T}}Gx_{k}=\left\Vert x_{k}\right\Vert c^{\mathsf{T}}y_{k}+\textstyle%
\frac{1}{2}\left\Vert x_{k}\right\Vert ^{2}y_{k}^{\mathsf{T}%
}Gy_{k}\longrightarrow +\infty ,$ a contradiction. 
Hence, by our assumption, 
$c^{\mathsf{T}}y\geq 0.$ We cannot have $c^{\mathsf{T}}y>0,$  as otherwise
for large enough $k$ we would have $c^{\mathsf{T}}x_{k}=\left\Vert
x_{k}\right\Vert c^{\mathsf{T}}y_{k}>0$ and thus $c^{\mathsf{T}}x_{k}+%
\textstyle\frac{1}{2}x_{k}^{\mathsf{T}}Gx_{k}>0$ due to $x_k^{\mathsf{T}}Gx_k \geq 0$, which is impossible
because of (\ref{seq -infty}). Therefore $c^{\mathsf{T}}y=0.$  This will be used later.

Collecting more facts about $y$, 
note that as a consequence of our standing assumption $x^{\mathsf{T}}Gx \geq 0$ for $x\in D$, $y$ is a minimizer of the quadratic form $\textstyle\frac{1}{2}x^{%
\mathsf{T}}Gx$ over $D,$ which implies that $Gy$ belongs to the positive
polar cone of $D,$ that is, $x^{\mathsf{T}}Gy\geq 0$ for every $x\in D.$ 
This property will also be used below.

Let 
$E=\left\{ e_{1},...,e_{l}\right\} $ be the set of generating rays of $D,$ and
for $i=1,...,l$ denote by $D_{i}$ and $\widehat{D}_{i}$ the cones generated     
by $E\setminus \left\{ e_{i}\right\} $ and $\left( E\setminus \left\{
e_{i}\right\} \right) \cup \left\{ y\right\} ,$ respectively. As the
induction hypothesis applies to each $D_i$, we have $\inf_{x\in D_{i}}c^{\mathsf{T}}x+%
\textstyle\frac{1}{2}x^{\mathsf{T}}Gx>-\infty$ for every $i$,  so the infimum $m$ of $c^{%
\mathsf{T}}x+\textstyle\frac{1}{2}x^{\mathsf{T}}Gx$ over $%
\bigcup\limits_{i=1}^{l}D_{i}$ is finite.

Now observe that
\begin{equation}
\label{newclaim}
D=\bigcup\limits_{i=1}^{l}\widehat{D}_{i}.
\end{equation}
Indeed, the inclusion $\supseteq$ being clear, take $x\in D$ and write it as $x=\sum_{i=1}^l \lambda_ie^i$ for certain $\lambda_i \geq 0$.
Since $y\in D \setminus\{0\}$, we have $y =\sum_{i\in I} \mu_ie^i$
for some $\emptyset \not= I \subset \{1,\dots,l\}$ and $\mu_i > 0$.  Put $\nu = \min\{\lambda_i/\mu_i: i \in I\} =: \lambda_{i_0}/\mu_{i_0}$, then
\begin{align*}
x &= \sum_{i\in I} \lambda_ie^i + \sum_{j\not\in I} \lambda_je^j + \nu \left( y- \sum_{i\in I} \mu_i e^i \right) 
= \sum_{i\in I} \left(\lambda_i - \nu \mu_i   \right) e^i + \sum_{j\not\in I} \lambda_j e^j + \nu y.
\end{align*}
Since $\lambda_i - \nu \mu_i \geq 0$ for every $i\in I$, and $\lambda_{i_0} - \nu \mu_{i_0}=0$, we have
shown $x \in \widehat{D}_{i_0}$. That proves (\ref{newclaim}).

Now, using (\ref{newclaim}),  for every $%
x\in D$ there exist $i\in \left\{ 1,...,l\right\} ,$ $z\in D_{i},$ and $%
\lambda \geq 0$  such that $x=z+\lambda y.$ We then have $c^{\mathsf{T}}x+ %
\textstyle\frac{1}{2}x^{\mathsf{T}}Gx=c^{\mathsf{T}}z+ \lambda c^{\mathsf{T}} y + \textstyle\frac{1}{2}%
z^{\mathsf{T}}Gz+\lambda z^{\mathsf{T}}Gy + \frac{1}{2} \lambda^2 y^{\mathsf{T}} Gy
= c^{\mathsf{T}} z + \frac{1}{2} z^{\mathsf{T}} Gz + \lambda z^{\mathsf{T}} Gy \geq c^{\mathsf{T}}z+ \frac{1}{2}z^{\mathsf{T}}Gz \geq  m,$ which gives $%
\inf_{x\in D}c^{\mathsf{T}}x+\textstyle\frac{1}{2}x^{\mathsf{T}}Gx=m,$ 
contradicting (\ref{unbounded}). This shows that our claim (\ref{domain}) was correct.
\\

2)
Now by the Farkas-Minkowski-Weyl theorem (cf. \cite[Thm. 19.1]{rock} or \cite[Cor. 7.1a]{schrijver}) the polyhedral cone
$D$ is the linear image of the positive orthant of a space $\mathbb R^p$ of appropriate dimension,
i.e. $D=\{Zu: u\in \mathbb R^p, u\geq 0\}$. Using (\ref{domain}), this implies
\[
{\rm dom}(f)=\{c: c^\ttop Zu \geq 0 \mbox{ for every } u \geq 0 \mbox{ such that } u^\ttop Z^\ttop GZu=0\}.
\]
Now observe that  if $u\geq 0$ satisfies $u^\ttop Z^\ttop GZu=0$, then it is a minimizer
of the quadratic function $u^\ttop Z^\ttop GZu$ on the cone $u\geq 0$, hence $Z^\ttop GZu\geq 0$ by the Kuhn-Tucker conditions. 
Therefore we can write the set $P=\{u\in \mathbb R^p: u\geq 0, u^\ttop Z^\ttop GZu=0\}$ as
\[
P = \bigcup_{I\subset \{1,\dots,p\}} P_I  ,
\]
where the $P_I$ are the polyhedral convex cones
$$
P_I= \{u\geq 0: Z^\ttop GZu \geq 0, u_i=0 \mbox{ for all }i\in I, (Z^\ttop GZu)_j=0 \mbox{ for all }j\not\in I\}.
$$
For every $I\subset \{1,\dots,p\}$ choose $m_I$  generators $u_{I1},\dots,u_{Im_I}$ of $P_I$. 
Then,  
\begin{align}
\label{dom}
{\rm dom}(f) &= \left\{c: c^\ttop Zu \geq 0 \mbox{ for every } u\in P\right\} \\
&=\left\{c: c^\ttop Zu \geq 0 \mbox{ for every } u\in \textstyle \bigcup_{I\subset\{1,\dots,p\}} P_I\right\} \notag \\
&=\textstyle \bigcap_{I\subset \{1,\dots,p\}} \left\{c: c^\ttop Zu \geq 0 \mbox{ for every } u \in P_I\right\}  \notag\\
&= \textstyle \bigcap_{I\subset \{1,\dots,p\}} \left\{c: c^\ttop Zu_{Ij} \geq 0 \mbox{ for all } j=1,\dots,m_I\right\}. \notag
\end{align}
Since a finite intersection of polyhedral cones is polyhedral, 
this proves that dom$(f)$ is a polyhedral convex cone. 

3)
To conclude, continuity of
$f$ relative to its domain now follows from  polyhedrality of dom$(f)$, and using \cite[Thm. 10.2]{rock}, since $f$ is clearly concave and
upper semicontinuous. This completes the proof of (2) $\implies$ (1).

(1) $\implies$ (3) was proved in Proposition \ref{FWnof}. Let us prove (3) $\implies$ (2). By Mirkil's theorem (Lemma \ref{mirkil}) it suffices to show that every orthogonal projection
$P(F)$ is closed. 
Suppose this is not the case, and let $y \in \overline{P(F)}\setminus P(F)$. Let $L = y + {\rm ker}(P)$, then
$F \cap L = \emptyset$. Now choose $y_n \in F$ such that $P(y_n) \to P(y)=y$. Then
$y_n = P(y_n) + z_n$ with $z_n \in {\rm ker}(P)$. Hence $P(y) + z_n\in L$, but
$\|(P(y_n) + z_n) - (P(y)+z_n) \| \to 0$, which shows dist$(F,L)=0$. That means $F$ has an $f$-asymptote, a contradiction. 
\end{proof}

\begin{remark}
The main implication (2) $\implies$ (1)  in Theorem \ref{kummer}  
was first proved by Kummer \cite{kummer}.
Our proof of (2) $\implies$ (1) is  slightly stronger in so far as it gives additional information
on the polyhedrality of the  domain of $f$ in Lemma \ref{FW2}.
\end{remark}


\begin{remark}
We refer to Bank \emph{et al.} \cite[Thm. 5.5.1 (4)]{banks} 
for a  result related to Lemma \ref{FW2} in the case where $G\succeq 0$. For
the indefinite case see also Tam \cite{tam}.
\end{remark}

\begin{remark}
\label{rem3} The statement of Theorem \ref{kummer} is no longer correct if one drops the hypothesis
that $F$ is a Motzkin set. We take the convex $F=\{(x,y)\in \mathbb{R}^2: x>0,
y>0,    \,xy \geq 1\}$, then $F$, being limited by a hyperbola, has $f$-asymptotes, hence is not \emph{qFW}, but $0^+F$
is the positive orthant, which is polyhedral.
\end{remark}

\begin{corollary}
\label{resume}
{\em
A Motzkin decomposable set $F$ without $f$-asymptotes is Frank-and-Wolfe. }
\end{corollary}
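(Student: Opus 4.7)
The plan is to observe that this corollary is essentially a direct specialization of Theorem \ref{kummer}. Recall that a Motzkin decomposable set is by definition a convex set $F$ admitting a representation $F = C + D$ with $C$ compact convex and $D$ a closed convex cone; in particular, $F$ is closed (as the sum of a compact set and a closed convex cone, using that $D$ has no lines problem is handled by passing to the recession cone of $F$, but closedness of $C+D$ is standard for $C$ compact and $D$ closed). Hence every Motzkin decomposable set is an $M$-set in the sense of the definition preceding Theorem \ref{kummer}, with the compact summand $K=C$ chosen convex.

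Given this, I would simply invoke the implication (3) $\implies$ (1) of Theorem \ref{kummer}: since $F$ is an $M$-set and, by hypothesis, has no $f$-asymptotes, it follows that $F$ is a $FW$-set. No further work is needed.

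I do not anticipate any real obstacle, since the only thing to verify is that the hypothesis of Motzkin decomposability places $F$ in the class covered by Theorem \ref{kummer}, which is immediate from the definitions. One could optionally remark, for emphasis, that by the uniqueness discussion in the remark following the definition of $M$-set, the cone $D$ appearing in any Motzkin decomposition of $F$ coincides with $0^+F$, so the conclusion of Theorem \ref{kummer} also yields polyhedrality of $0^+F$ as a byproduct; but this is not needed for the stated corollary.
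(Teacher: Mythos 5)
Your proposal is correct and follows essentially the same route as the paper: both reduce the corollary to Theorem \ref{kummer}, the paper merely inserting a cosmetic intermediate step (no $f$-asymptotes plus convexity gives $qFW$ via Theorem \ref{asymptote}) before concluding $FW$ from Theorem \ref{kummer}, whereas you invoke the implication (3) $\implies$ (1) directly. Your verification that a Motzkin decomposable set is an $M$-set (closedness of $C+D$ for $C$ compact and $D$ closed) is the only point needing checking and is fine, though the parenthetical about ``$D$ has no lines'' is garbled and should be deleted.
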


\begin{proof}
Since $F$ has no $f$-asymptotes and is convex, it is a $qFW$-set by Theorem \ref{asymptote}. But  then
by Theorem \ref{kummer}, $F$ is even a $FW$-set.
\end{proof}

\section{Invariance properties of Motzkin {\em FW}-sets}
\label{invariance}
We have seen in example \ref{example_LZ} that intersections of $FW$-sets
need no longer be $FW$-sets, not even when convexity is assumed. In contrast, the class of
$qFW$-sets turned out closed under finite intersections. This rises the question whether more
amenable sub-classes of the class of {\em FW}-sets with better invariance properties may be
identified. In response we show
in this chapter that the class of Motzkin {\em FW}-sets, for short {\em FWM}-sets, is better behaved with
regard to invariance properties.

\begin{lemma}
\label{motzkin_inter}
{\em
Consider a set of the form $K +D$, where $K$ is compact and $D$ is a polyhedral closed convex cone in $\mathbb R^n$.
Let $L$ be a linear subspace of $\mathbb R^n$. Then there exists a compact set $K_0$
such that $(K+D) \cap L = K_0 + (D\cap L)$.
}
\end{lemma}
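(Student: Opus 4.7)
The plan is to slice the cone $D$ by the parametric family of affine subspaces $L-k$, $k\in K$, apply Motzkin's classical decomposition theorem for polyhedra to each slice, and exploit uniform boundedness of the polytope parts as $k$ ranges over the compact set $K$. First observe that $x = k + d \in (K+D) \cap L$ with $k \in K$, $d \in D$ precisely when $d$ lies in the polyhedron $P_k := D \cap (L-k)$, whose recession cone equals $D \cap L$ independently of $k$. By the polyhedral Motzkin theorem (reducing if necessary to the case in which $D \cap L$ is pointed by modding out its lineality space), one can write $P_k = \operatorname{conv}(V_k) + (D \cap L)$, where $V_k$ is the finite vertex set of $P_k$, and consequently
\[
(K+D) \cap L \;=\; \bigcup_{k\in K}\bigl(k + \operatorname{conv}(V_k)\bigr) \;+\; (D \cap L).
\]
It therefore suffices to produce a compact $K_0 \subset (K+D)\cap L$ containing $\bigcup_{k\in K}(k + \operatorname{conv}(V_k))$: the reverse inclusion $K_0 + (D\cap L) \subset (K+D)\cap L$ is automatic, since $(K+D)\cap L$ is closed and invariant under translations by $D \cap L$.

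The key step is thus to show that $\bigcup_{k\in K}V_k$ is bounded, after which one takes $K_0$ to be the closure of $\bigcup_{k\in K}(k + \operatorname{conv}(V_k))$. Writing $D = \{d : a_i^\ttop d \leq 0,\, i=1,\dots,m\}$ and $L = \{x : b_j^\ttop x = 0,\, j=1,\dots,p\}$, every vertex $v$ of $P_k$ is the unique solution of a square linear system
\[
a_i^\ttop v = 0 \;\; (i \in I), \qquad b_j^\ttop v = -b_j^\ttop k \;\; (j = 1,\dots,p),
\]
for some $I \subset \{1,\dots,m\}$ with $|I| + p = n$ and invertible combined coefficient matrix. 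The solution $v$ depends linearly on $k$, and since there are only finitely many admissible index sets $I$, the set $\bigcup_{k\in K}V_k$ is a finite union of continuous linear images of the compact set $K$, hence bounded.

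The main obstacle is precisely this uniform boundedness of the parametric vertex set, which rests on the finite combinatorial structure of $D$ together with the linearity in $k$ of each active subsystem. It remains to handle the non-pointed case. Let $\ell := (D\cap L)\cap\bigl(-(D\cap L)\bigr)$ be the lineality space of $D\cap L$; then $\ell \subset D \cap (-D) \cap L$, so translation by $\ell$ preserves $K+D$ and $L$, hence $(K+D)\cap L$. Running the preceding argument inside $\ell^\perp$ (where the image of $D\cap L$ becomes pointed) gives a compact $\widetilde K_0 \subset \ell^\perp$ with $(K+D)\cap L \cap \ell^\perp = \widetilde K_0 + \bigl((D\cap L)\cap\ell^\perp\bigr)$. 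Combining this with $\ell$-translation invariance and using $D\cap L = \bigl((D\cap L)\cap \ell^\perp\bigr) + \ell$ yields $(K+D)\cap L = \widetilde K_0 + (D\cap L)$, so $K_0 := \widetilde K_0$ is the required compact set.
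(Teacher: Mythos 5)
Your proof is correct, but it reaches the conclusion by a genuinely different route than the paper. Where you invoke the classical Minkowski--Weyl vertex decomposition $P_k=\operatorname{conv}(V_k)+(D\cap L)$ of each slice and then bound $\bigcup_{k\in K}V_k$ \emph{explicitly} --- every vertex solves one of finitely many invertible square subsystems drawn from the fixed inequality description of $D$ and $L$, with right-hand side depending linearly on $k$, so the parametric vertex set sits inside a finite union of continuous linear images of the compact $K$ --- the paper instead uses the minimal Motzkin decomposition $P_x=K(P_x)+(D\cap L)$ of Goberna et al.\ (with $K(P_x)$ the closed convex hull of the set $M(P_x)$ of $(D\cap L)$-minimal points) and proves boundedness of $\bigcup_{x\in K}M(P_x)$ by contradiction, via a classification of the faces of $D$ according to whether they recede in a direction of $L$ and a compactness/subsequence argument. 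Your treatment of the degenerate case also differs: the paper lifts $D$ to a pointed cone $\widetilde D$ in a doubled space $\mathbb R^m\times\mathbb R^m\times\mathbb R^p$ and transports the result back through the map $(x^+,x^-,y)\mapsto(x^+-x^-,y)$, whereas you quotient by the lineality space $\ell$ of $D\cap L$ and apply the pointed case to the subspace $L\cap\ell^\perp$, which is the lighter reduction since only pointedness of $D\cap L$, not of $D$, is ever needed. What your argument buys is a shorter, more constructive and self-contained proof (no appeal to the minimal-decomposition theorem, and a quantitative description of $K_0$ via finitely many affine maps of $K$); what the paper's buys is consistency with the Motzkin-decomposition machinery used throughout and a face-based viewpoint. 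Two cosmetic points worth tightening in your write-up: state that $b_1,\dots,b_p$ is chosen as a basis of $L^\perp$ so that the count $|I|+p=n$ is literally correct (at a degenerate vertex one merely \emph{selects} $n$ independent active rows), and note that slices $P_k=\emptyset$ contribute nothing to the union, so they cause no harm.
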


\begin{proof}
1)
We assume 
for the time being that the cone $D \cap L$ is pointed. 
For fixed $x\in K$ consider the polyhedron $P_x := (x+D) \cap L$. Define
$M(P_x) = \{x' \in P_x : (x'-(D\cap L)) \cap P_x = \{x'\}\}$, and let $K(P_x)$ be the closed convex hull of
$M(P_x)$. Then according to \cite[Thm. 19]{goberna} the set $K(P_x)$ is compact, and  we have the minimal Motzkin decomposition $P_x = K(P_x) + (D\cap L)$.
This uses the fact that $D \cap L$ is the recession cone of  $P_x$.  It follows that
\[
(K + D) \cap L = \bigcup_{x\in K} (x+D) \cap L = \bigcup_{x\in K} K(P_x) + (D \cap L),
\]
so all we have to do is show that the set $\cup_{x\in K} K(P_x)$ is bounded, as then its closure $K_0$ is the compact set
announced in the statement of the  Lemma. To prove boundedness of   $\cup_{x\in K} K(P_x)$ it clearly suffices to show that
$\cup_{x\in K} M(P_x)$ is bounded.


Let $\mathcal F$ be the finite set of faces of $D$, where we assume that $D$ itself is 
a face. Let ${x}' \in M(P_x)$, then $x'$ is in the relative interior of one of the faces  $x + F$, $F\in \mathcal F$,  of the shifted cone $x+D$. 

We divide the faces $F \in \mathcal F$ of the cone $D$ into two types: 
$\mathcal F_1$ is the class of those faces $F\in \mathcal F$ for which there exists $d \in L$, $d\not=0$,  
such that $d$ is a direction of recession of $F$, i.e., those where $F \cap L$ does not reduce to $\{0\}$. 
The class $\mathcal F_2$ gathers the remaining faces of $D$  which are not in the class $\mathcal F_1$.

Now suppose the set $\bigcup_{x\in K} M(P_x)$ is unbounded. Then there exists
a sequence $x_k\in K$ and ${x}_k' \in M(P_{x_k})$ with $\|{x}_k'\|\to\infty$. 
From the above we know that each ${x}_k'$ is in the relative interior of $x_k + F_k$ for some $F_k\in \mathcal F$.
Since there are only
finitely many faces,
we can extract a subsequence, also denoted $x_k$ and satisfying $\|{x}_k'\|\to\infty$,   
such that the ${x}_k'$ are relative interior points of $x_k + F$ for the same fixed face $F\in \mathcal F$. Due to compactness of $K$
we may, in addition, assume that $x_k \to x\in K$. Using the definition of $M(P_{x_k})$
write ${x}_k' = x_k + t_k d_k\in L$ with $d_k\in F\subset D$, $\|d_k\|=1$, $t_k > 0$, $t_k \to\infty$. 
Passing to yet another subsequence, assume that $d_k\to d$, where $\|d\|=1$.
It follows that $d\in L$, because in the expression
$
{{x}_k'}/{t_k} = {x_k}/{t_k} + d_k
$
the middle term tends to 0 due to compactness of $K$ and $t_k \to \infty$, while the left hand term is in $L$ because $x_k'$ belongs to $L$. 
Since $F$ is a cone, it also follows that $x + \mathbb R_+d \subset x + F$, hence
$d\in F$.
This shows that the face $F$ is in the class $\mathcal F_1$.

2)
So far we
have shown that
$\bigcup_{F \in \mathcal F_2} \{x' \in M(P_x):  x\in K, x' \in {\rm ri}(x+F) \}$ is a bounded set. 
It
 remains to prove that this set contains already all points $x' \in M(P_x)$,  $x\in K$, i.e., that
$x' \in M(P_x)$ cannot be a relative interior point of any of the faces $x+F$ with $F \in \mathcal F_1$.

3)
Contrary to what is claimed, 
consider $x\in K\setminus L$ such that $x'\in M(P_x)$ satisfies ${x}'\in {\rm ri}(x+F)$ for some  $F \in \mathcal F_1$. By definition of the class $\mathcal F_1$
there exists $d \in L \cap F$, $d\not=0$. 
Since ${x}' \in L$ by the definition of $M(P_x)$, 
we have $x' + \mathbb R d \subset L$.  But this line is also contained in $x+{\rm span}(F)$, because
we have $d\in {\rm span}(F)$ and $x' = x + d'$ for some $d' \in F$, hence 
$x' + \mathbb R d \subset x+ {\rm span}(F)$.

Since ${x}'$ is a relative interior point of $x+F$, there exists $\epsilon > 0$
such that $N_\epsilon =\{{x}' + sd: |s| < \epsilon\}$ is contained in $x+F$.  Since $d\in F\cap L\subset D\cap L$, we have arrived at a contradiction with
the fact that ${x}' \in M(P_x)$. Namely, moving in $N_\epsilon$ we can stay in $P_x$ while going from
$x'$ slightly in the direction of $-d\in -(D\cap L)$. This contradiction shows that what was claimed in 2) is true.
The Lemma is therefore proved for pointed $D \cap L$.


4)
Suppose now $D$ is allowed to contain lines.   With a change of coordinates we may arrange that
$\mathbb R^n = \mathbb R^m \times \mathbb R^p$ and $D \subset \mathbb R^m \times \{0\}$, where the 
possibility $p=0$ is not excluded and corresponds to the case where $D-D=\mathbb R^n$. 
Now consider the space $\mathbb R^m \times \mathbb R^m \times \mathbb R^p$ and define the cone $\widetilde{D} \subset \mathbb R^m \times \mathbb R^m \times \mathbb R^p$ as
$\widetilde{D} = \{(x^+,x^-,0): x^\pm \in \mathbb R^m, x^\pm \geq  0, x^+-x^- \in D \}$. Then $\widetilde{D}$
is polyhedral  and pointed. Let $T$ be the mapping $(x^+,x^-,y) \mapsto (x^+-x^-,y)$, then  $T(\widetilde{D})=D$. Since
$T$ maps $\mathbb R^m \times \mathbb R^m \times \mathbb R^p$  onto $\mathbb R^m \times \mathbb R^p$, there exists a compact set $\widetilde{K}\subset \mathbb R^m\times \mathbb R^m \times \mathbb R^p$
such that $T(\widetilde{K}) = K$. Put $\widetilde{L} = T^{-1}(L)$. Now since $\widetilde{D}$ is pointed, the first part of the proof
gives a compact $\widetilde{K}_0 \subset \mathbb R^m \times \mathbb R^m \times \mathbb R^p$ 
such that $(\widetilde{K}+\widetilde{D}) \cap \widetilde{L} = \widetilde{K}_0 + (\widetilde{D} \cap \widetilde{L})$. Applying $T$
on both sides, and using the fact that
$\widetilde{L}$ is a pre-image, we deduce $(K+D) \cap L=T(\widetilde{K}_0) + (D \cap L)$. On putting 
$K_0 = T(\widetilde{K}_0)$ which is compact, we get the desired statement
$(K+D)\cap L=K_0 + (D \cap L)$.  That completes the proof
of the Lemma.
\end{proof}

\begin{corollary}
\label{FWM}
{\em
Any finite intersection of sets of the form $K + D$ with $K$ compact and 
$D$ a polyhedral convex cone is again a set of this form.}
\end{corollary}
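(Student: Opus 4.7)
By an obvious induction it suffices to treat the case of two sets $F_1 = K_1 + D_1$ and $F_2 = K_2 + D_2$ with $K_i \subset \mathbb{R}^n$ compact and $D_i$ polyhedral convex cones. The strategy is to lift the intersection to a cross-product in $\mathbb{R}^{2n}$, where it becomes the intersection of a single Motzkin-type set with a linear subspace, and then apply Lemma \ref{motzkin_inter}.

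Concretely, I would first consider the product
$$
F_1 \times F_2 = (K_1\times K_2) + (D_1\times D_2),
$$
which is of the required form in $\mathbb{R}^{2n}$, with $K_1\times K_2$ compact and $D_1\times D_2$ a polyhedral convex cone. Next I would intersect with the diagonal $\Delta = \{(x,x):x\in\mathbb{R}^n\}$, a linear subspace of $\mathbb{R}^{2n}$. By Lemma \ref{motzkin_inter} there exists a compact set $\widetilde{K}_0\subset\mathbb{R}^{2n}$ such that
$$
(F_1\times F_2)\cap \Delta = \widetilde{K}_0 + \bigl((D_1\times D_2)\cap \Delta\bigr).
$$
Note that $(D_1\times D_2)\cap \Delta = \{(d,d):d\in D_1\cap D_2\}$, and that $D_1\cap D_2$ is polyhedral as the intersection of two polyhedral cones.

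Finally I would apply the projection $\pi:(x,y)\mapsto x$ to both sides. Since $(F_1\times F_2)\cap \Delta$ is just the graph of the identity on $F_1\cap F_2$, this yields
$$
F_1\cap F_2 = \pi(\widetilde{K}_0) + (D_1\cap D_2),
$$
with $\pi(\widetilde{K}_0)$ compact as the continuous image of a compact set. Setting $K_0 := \pi(\widetilde{K}_0)$ and $D_0 := D_1\cap D_2$ completes the case $m=2$, and iterating handles arbitrary $m$. The whole content of the proof is carried by Lemma \ref{motzkin_inter}; the only thing to watch is that the product/diagonal trick faithfully translates a two-fold intersection into a subspace intersection of a single Motzkin-type set, so no further obstacle is expected.
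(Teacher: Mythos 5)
Your proposal is correct and follows essentially the same route as the paper's own proof: reduce to two sets, form the cross product $(K_1\times K_2)+(D_1\times D_2)$, intersect with the diagonal via Lemma \ref{motzkin_inter}, and project back onto the first factor. The only (harmless) difference is that you explicitly identify the resulting cone as $D_1\cap D_2$, whereas the paper just records it as the projection of a polyhedral cone.
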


\begin{proof}
It suffices to consider the case of two sets $F_i = K_i + D_i$ in $\mathbb R^n$, $i=1,2$,  with compact $K_i$ and $D_i$ polyhedral convex cones.
We build the set $F = F_1 \times F_2$ in $\mathbb R^n \times \mathbb R^n$, which is of the same form,  
because trivially $(K_1 + D_1) \times (K_2 + D_2) = (K_1 \times K_2) + (D_1 \times D_2)$, and since
the product of two polyhedral cones is a polyhedral cone.

Now by Lemma \ref{motzkin_inter}  the intersection of $F_1 \times F_2$
with the diagonal $\Delta = \{(x,x): x \in \mathbb R^n\}$ is a set of the form $\mathcal K + \mathcal D$ with $\mathcal K$ compact and
$\mathcal D$ a polyhedral convex cone, because
the diagonal is a linear subspace. Finally,  $F_1 \cap F_2$ is the image
of $\mathcal K + \mathcal D$ under the projection $p:(x,y)\to x$ onto the first coordinate, hence  is 
of the form $p(\mathcal K) + p(\mathcal D)$, and since $p(\mathcal D)$ is a polyhedral convex cone, we are done.
\end{proof}

We conclude with the following invariance property of the class $FWM$:

\begin{proposition}
\label{pre-image}
{\em
If the pre-image of a FWM-set under an affine mapping is nonempty, then it is a FWM-set.}
\end{proposition}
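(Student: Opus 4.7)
The plan is to realize $T^{-1}(F)$ as the image under a linear projection of the intersection of two sets of ``compact plus polyhedral cone'' form, and then chain together the invariance results already established, in particular Lemma \ref{motzkin_inter}. First I would reduce to the case of linear $T:\mathbb{R}^n\to\mathbb{R}^m$: since a translate of a set $K+D$ is of the form $(K+v)+D$ with the same cone $D$, translating $F$ by $-T(0)$ reduces the affine case to the linear one, as both $T^{-1}(F)$ and the class FWM are invariant under translation of the target.

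Next, by Theorem \ref{kummer} I would write $F=K+D$ with $K$ compact and $D$ a polyhedral convex cone. The product $\mathbb{R}^n\times F$ can be expressed as $(\{0\}\times K)+(\mathbb{R}^n\times D)$, a set of the form (compact) $+$ (polyhedral convex cone), since the product of two polyhedral cones is polyhedral. Intersect this product with the graph $\mathrm{gr}(T)=\{(x,Tx):x\in\mathbb{R}^n\}$, which is a linear subspace of $\mathbb{R}^n\times\mathbb{R}^m$. Lemma \ref{motzkin_inter} then yields a compact $\widetilde K$ with
\[
(\mathbb{R}^n\times F)\cap\mathrm{gr}(T) \;=\; \widetilde K + \widetilde D,
\]
where $\widetilde D=(\mathbb{R}^n\times D)\cap\mathrm{gr}(T)$ is the intersection of two polyhedral cones, hence polyhedral. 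The hypothesis $T^{-1}(F)\neq\emptyset$ guarantees that this intersection is non-empty.

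Finally, let $\pi:\mathbb{R}^n\times\mathbb{R}^m\to\mathbb{R}^n$ be the projection onto the first factor. A direct check gives $\pi\bigl((\mathbb{R}^n\times F)\cap\mathrm{gr}(T)\bigr)=T^{-1}(F)$, so
\[
T^{-1}(F) \;=\; \pi(\widetilde K)+\pi(\widetilde D),
\]
where $\pi(\widetilde K)$ is compact and $\pi(\widetilde D)$ is polyhedral as the linear image of a polyhedral cone. Thus $T^{-1}(F)$ has the form (compact) $+$ (polyhedral convex cone), and by Theorem \ref{kummer} it is a FWM-set.

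The one non-routine step in this chain is the intersection with the linear subspace $\mathrm{gr}(T)$, and this is precisely where Lemma \ref{motzkin_inter} does the essential work: it guarantees that after slicing $\mathbb{R}^n\times F$ by the graph we retain a \emph{compact} summand, rather than being left with an unbounded one. All other operations---cartesian products with $\mathbb{R}^n$, intersections of polyhedral cones, and linear images of compact sets and of polyhedral cones---are standard, so the main obstacle has already been surmounted in the preceding lemma.
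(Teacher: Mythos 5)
Your argument is correct, and it reaches the conclusion by a genuinely different decomposition than the paper's, although both proofs ultimately rest on the same key ingredient, Lemma \ref{motzkin_inter}. The paper writes $T^{-1}(F)=\bigl(T_{|{\rm ker}(T)^{\perp}}\bigr)^{-1}\bigl(F\cap R(T)\bigr)+{\rm ker}(T)$: it first intersects $F$ with the subspace $R(T)$ (invoking Corollary \ref{FWM}, itself a consequence of Lemma \ref{motzkin_inter} via the diagonal trick), then transports the result through the isomorphism $\bigl(T_{|{\rm ker}(T)^{\perp}}\bigr)^{-1}$, and finally adds the subspace ${\rm ker}(T)$, using closure of the class under affine images and under sums. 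You instead realize $T^{-1}(F)$ in one stroke as $\pi\bigl((\mathbb{R}^n\times F)\cap\mathrm{gr}(T)\bigr)$ and apply Lemma \ref{motzkin_inter} directly to the slice by the graph; this is the same graph/diagonal device the paper uses to prove Corollary \ref{FWM}, but deployed at the level of the pre-image itself. What your route buys is economy: you bypass the isomorphism step and the ``sums of FWM-sets are FWM'' step entirely, and the only non-trivial input is the single intersection lemma, with everything else being stable elementary operations (products with $\mathbb{R}^n$, intersections of polyhedral cones, linear images of compact sets and of polyhedral cones). What the paper's route buys is that it factors cleanly through the already-stated Corollary \ref{FWM}, making the pre-image result read as a corollary of the intersection result rather than a fresh application of the lemma. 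Two small points you handle correctly and that are worth keeping explicit: the nonemptiness hypothesis is what makes the slice $(\mathbb{R}^n\times F)\cap\mathrm{gr}(T)$ nonempty so that Lemma \ref{motzkin_inter} produces a nonempty compact summand, and the final appeal to Theorem \ref{kummer} is legitimate because a set of the form (compact) $+$ (polyhedral convex cone) is a closed Motzkin set whose recession cone is that polyhedral cone.
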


\begin{proof}
Let $T$ be an affine mapping and $F$ be a $FWM$-set such that $T^{-1}\left(
F\right) \neq \emptyset $. Since translates of $FWM$-sets are $FWM$, there is no
loss of generality in assuming that $T$ is linear. Then the restriction of $T$ to ker$(T)^\perp$ is a bijection
from ${\rm ker}(T)^{\perp}$ onto $R\left(T\right)$, and one has%
\[
T^{-1}\left(F\right) =\left( T_{|{\rm ker}(T)^{\perp}}\right)
^{-1}\left(F\cap R\left(T\right)\right) +  {\rm ker}(T).
\]%
Since $R\left(T\right) $ is a subspace, hence a convex polyhedron, and $%
T^{-1}\left(F\right) \neq \emptyset$,  the set $F\cap
R\left( T\right) $ is $FWM$ by Corollary \ref{FWM}. Since 
$\left( T_{|{\rm ker}(T)^{\perp}}\right)
^{-1}$
is an isomorphism from $R\left( T\right) $ onto 
${\rm ker}(T)^{\bot}$,  the set 
$
\left( T_{|{\rm ker}(T)^{\perp}}\right)
^{-1}\left(F\cap R\left(T\right)\right) $
is $FWM$. Hence it
suffices to observe that ${\rm ker}(T)$, being a subspace,  is $FWM$, and that the
class of $FWM$-sets is closed under taking sums.
\end{proof}

\begin{remark}
It is worth mentioning that in general the affine pre-image of a Motzkin decomposable
set need not be Motzkin decomposable. To wit, consider the ice cream cone
$F$ in $\mathbb R^3$ and the mapping $T: (x_1,x_2,x_3) \mapsto (1,x_2,x_3)$, then the linear function
$x_3-x_2$ does not attain its infimum on  $T^{-1}(F)$, which proves that $T^{-1}(F)$ is not Motzkin decomposable.
\end{remark}

\begin{remark}
In Proposition \ref{prop5} we had proved that the affine pre-image $T^{-1}(F)$ of a $FW$-set is 
$FW$ if $F$ is contained in the range of $T$.  A priori this additional range condition cannot be removed, because we have no result which
guarantees that $F \cap {\rm range}(T)$ is still a $FW$-set (if nonempty). As we just saw, this range condition {\em can} be removed
for $FWM$-sets, and also for $qFW$-sets, so these two classes are invariant under affine pre-images without further range restriction.
\\

\noindent
{\bf Open question:} Let $F$ be a {\em FW}-set and $L$ a linear subspace, is $F \cap L$ a {\em FW}-set?
\end{remark}

\begin{remark}
Altogether we have found the class of {\em FWM}-sets to be closed under finite
products, finite intersections, images and pre-images under affine maps. If we call
a set {\em FWMU} if it is a finite union of {\em FWM}-sets, then sets in this class
are still {\em FW}-sets. By De Morgan's law the class {\em FWMU} remains
closed under finite intersections.  The class
{\em FWMU} remains also closed under affine pre-images, because the pre-image of a union coincides with the union
of the pre-images.   Similarly the class $FWMU$ remains closed under affine images.
\end{remark}

{\color{black}
\section{Parabolic sets}

As we have seen in
Theorem \ref{kummer}, the search
for new {\em FW}-sets does not lead very far   beyond
polyhedrality within the Motzkin class, because if a Motzkin set $F = K+D$ is to be $FW$, then its recession cone $D=0^+F$
must already be polyhedral. The question is therefore whether one can
find $FW$-sets  which exhibit non-polyhedral asymptotic behavior, those  then being necessarily outside the Motzkin class.
 The following result  shows that  such {\em FW}-sets 
do indeed exist.

\begin{theorem}
\label{luo}
{\rm (Luo and Zhang \cite{LZ99}).}  {\em
Let $P$ be a  closed convex polyhedron and define
$F=\{x\in P: x^\ttop Qx+q^\top x + c \leq 0\}$, where $Q=Q^\ttop \succeq 0$.  Then $F$ is  a $FW$-set.}
\hfill $\square$
\end{theorem}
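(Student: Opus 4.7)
The plan is to proceed by induction on the ambient dimension $n$. For $n=1$, $F$ is a closed subset of the real line, and any quadratic bounded below on a closed subset of $\mathbb R$ attains its infimum, so the base case is trivial.

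For the inductive step, let $\phi(x) = \tfrac12 x^\top A x + a^\top x$ be a quadratic bounded below on $F$ by $\mu$, and take a minimizing sequence $\{x_k\}\subset F$. If $\{x_k\}$ is bounded, a convergent subsequence together with closedness of $F$ yields a minimizer, so assume $\|x_k\|\to\infty$. First compute the recession cone: expanding $g(x+td)\leq 0$ in $t\geq 0$ and using $Q\succeq 0$ forces $d^\top Qd = 0$, equivalently $Qd=0$, and then $q^\top d\leq 0$, so
\[
0^+F \;=\; 0^+P\,\cap\,\ker Q\,\cap\,\{d:q^\top d\leq 0\},
\]
which is polyhedral. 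Passing to a subsequence, $x_k/\|x_k\|\to d$ for some unit $d\in 0^+F$. Dividing $\phi(x_k)\to\mu$ by $\|x_k\|^2$ forces $d^\top Ad=0$, and boundedness below of $\phi$ along the ray $y+\mathbb R^+ d\subset F$ (for $y\in F$) gives $(Ay+a)^\top d\geq 0$ on $F$. Because $Qd=0$ and $d^\top Ad=0$, both $g$ and $\phi$ are \emph{affine} along $d$:
\[
g(y+td)=g(y)+t q^\top d,\qquad \phi(y+td)=\phi(y)+t(Ay+a)^\top d.
\]

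Now set $\tau_k:=\sup\{t\geq 0:x_k-td\in F\}$ and retract. If $\tau_k=+\infty$ for some $k$, then $\pm d$ is a lineality direction of $F$, and quotienting $\mathbb R^n$ by $\mathbb R d$ produces a set of the same form in dimension $n-1$, where the induction hypothesis closes the argument. Otherwise $\tilde x_k:=x_k-\tau_k d\in\partial F$ and the affinity above yields $\phi(\tilde x_k)\leq\phi(x_k)$, so $\{\tilde x_k\}$ remains minimizing. After subsequencing, either all $\tilde x_k$ lie on a fixed proper face $P'$ of $P$, or all satisfy $g(\tilde x_k)=0$ (necessarily with $q^\top d<0$). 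In the face case, $\{\tilde x_k\}\subset P'\cap\{g\leq 0\}$ is a set of the same type in the lower-dimensional affine hull of $P'$, and the induction hypothesis applies. In the $g=0$ case, I would invoke a Lagrangian reduction: choose $\lambda\geq 0$ calibrated via the complementary-slackness data carried by $d$; then $\phi+\lambda g$ is a quadratic bounded below on the polyhedron $P$, the classical Frank-and-Wolfe theorem produces a minimizer $x^\star\in P$, and a case analysis forces $g(x^\star)=0$, so $x^\star\in F$ with $\phi(x^\star)=\mu$.

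The main obstacle is the last subcase, where the retracted sequence lives on the active quadratic boundary $\{g=0\}$: this level surface is not affine, so the induction on ambient dimension cannot be applied directly. The Lagrangian reduction must then be performed without assumed convexity of $\phi$, so standard strong-duality arguments do not apply; the essential ingredient is the asymptotic recession direction $d$ extracted above, which simultaneously supplies the correct multiplier $\lambda$ and guarantees that $\phi+\lambda g$ is bounded below on $P$ with a $P$-minimizer lying in $F$, at which point the classical Frank-and-Wolfe theorem on polyhedra closes the argument.
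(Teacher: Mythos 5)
First, a point of reference: the paper does not prove this statement at all. Theorem \ref{luo} is quoted from Luo and Zhang \cite{LZ99} and used as a black box, so there is no internal proof to compare yours against; the comparison has to be made against what a complete proof would require. Your skeleton (extract a unit asymptotic direction $d$ of a minimizing sequence, deduce $Qd=0$, $d^\ttop Ad=0$ and $(Ay+a)^\ttop d\geq 0$ on $F$, retract backwards along $d$, and induct on the ambient dimension) is sound in its easy branches: the bounded case, the lineality case, and the case where the retracted sequence lands on a proper face of $P$ are all handled correctly. But the entire content of the theorem is concentrated in the one subcase you defer, and what you offer there is not an argument.

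Concretely, when the retracted sequence $\tilde x_k$ is unbounded and pinned to $\{g=0\}$ with $q^\ttop d<0$, your ``Lagrangian reduction'' needs (i) a \emph{constant} $\lambda\geq 0$ for which $\phi+\lambda g$ is bounded below on all of $P$ with infimum $\mu$, and (ii) a reason why some $P$-minimizer of $\phi+\lambda g$ satisfies $g=0$. Neither is supplied, and neither is routine. For (i), boundedness below on $P$ forces $v^\ttop(A+2\lambda Q)v\geq 0$ for every $v\in 0^+P$, i.e.\ $\lambda\geq -v^\ttop Av/(2v^\ttop Qv)$ over all recession directions of $P$ with $v^\ttop Av<0$; this supremum can a priori be $+\infty$, and the hypothesis that $\phi$ is bounded below on $F$ (not on $P$) gives no control over it --- this is an S-lemma-type assertion of exactly the kind that becomes false once polyhedral constraints accompany the single quadratic one. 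For (ii), if $x^\star$ minimizes $\phi+\lambda g$ over $P$ with value $\mu$, then $g(x^\star)>0$ yields $\phi(x^\star)<\mu$ with $x^\star\notin F$ and $g(x^\star)<0$ yields $\phi(x^\star)>\mu$; neither is a contradiction, so no ``case analysis forces $g(x^\star)=0$.'' Worse, the multiplier actually suggested by $d$ is $x$-dependent: pushing $x\in P$ into $F$ along $d$ takes $\max(0,g(x))/(-q^\ttop d)$ units and changes $\phi$ by that amount times $(Ax+a)^\ttop d$, so the exact penalty is $\phi(x)+\frac{(Ax+a)^\ttop d}{-q^\ttop d}\max(0,g(x))$ --- a piecewise \emph{cubic}, to which the classical Frank-and-Wolfe theorem no longer applies. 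You also do not address that $\tilde x_k$ may remain unbounded with a new asymptotic direction, so even setting up the reduction requires a further iteration or induction. This subcase is precisely where Luo and Zhang's published proof does its real work, and your plan does not close it.
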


The result generalizes the Frank and Wolfe theorem in the following sense:   if we add just one convex quadratic constraint 
$x^\ttop Qx+q^\top x + c \leq 0$ to a linearly constrained quadratic program, then 
finite infima of quadratics are still attained. As example \ref{example_LZ} shows, adding a second convex quadratic constraint already fails.

The question is now can the Luo-Zhang theorem, just like the Frank-and-Wolf theorem,  be extended from polyhedra $P$ to {\em FWM}-sets $F = K+D$?
That means,  if $F = K+D$ is a {\em FWM}-set, and if
$Q = Q^\ttop \succeq 0$, 
will the set $\mathcal F= \{x\in F: x^\ttop Qx + q^\ttop x + c \leq 0\}$ still be a {\em FW}-set ?
We show by way of a counterexample
that  the answer is in the negative. 

\begin{example}
\label{fail}
We consider the cylinder $F=\{(x_1,x_2,x_3,x_4)\in \mathbb R^4: (x_1-1)^2 +x_2^2\leq 1\}$.
Note that $F$ is a {\em FWM}-set, because it can be represented
as $F = K + L$ for the compact convex set $K= \{(x_1,x_2,0,0)\in \mathbb R^4: (x_1-1)^2+x_2^2 \leq 1\}$ and the subspace
$L = \{0\} \times \{0\} \times \mathbb R \times \mathbb R$.

Now we add the convex quadratic constraint $x_3^2 \leq x_4$ to the constraint set $F$, which leads to the set
$$
\mathcal F = \{x\in F: x_3^2 \leq x_4\}=
  \{x \in \mathbb R^4:  (x_1-1)^2+x_2^2 \leq 1, \; x_3^2 \leq x_4\}.
$$
We will show that $\mathcal F$ is no longer a {\em FW}-set.
This means that the extension of Theorem \ref{luo} from polyhedra $P$ to {\em FWM}-sets $F$ fails.

Consider
the quadratic function
$q(x) = x_4x_1 - 2x_2x_3 + 2$. We claim that $q$ is bounded below on $\mathcal F$ by $0$. Indeed, since $x_1 \geq 0$
on the feasible domain $\mathcal F$, we have $x_4x_1 \geq x_3^2x_1$ on the feasible domain, hence
$q(x) \geq x_3^2 x_1 - 2x_2x_3 + 2=q(x_1,x_2,x_3,x_3^2)$, the expression on the right no longer depending on $x_4$. Let us compute the infimum of that expression on $\mathcal F$.
This comes down to globally solving the program
\begin{eqnarray*}
(P) &
\begin{array}{ll}
\mbox{minimize} & x_3^2x_1 - 2x_2x_3 + 2\\
\mbox{subject to} &  (x_1-1)^2+x_2^2 \leq 1
\end{array}
\end{eqnarray*}
and
it is not hard to see that $(P)$ has infimum 0, but that this infimum is not attained. 
(Solve for $x_3$ with fixed $x_1,x_2$ and show that the value at $(x_1,x_2,x_2/x_1)$ goes to $0$
as $x_1 \to 0^+$, $(x_1-1)^2+x_2^2 = 1$, but that 0 is not attained).

Now if $x^k\in \mathcal F$ is a minimizing sequence
for $q$, then $\xi^k:=(x^k_1,x^k_2,x^k_3,(x_3^k)^2)\in \mathcal F$ is also feasible and gives $q(x^k) \geq q(\xi^k)$, so the sequence $\xi^k$ is also
minimizing, showing that the infimum of $q$ on $\mathcal F$ is the same as the infimum of $(P)$, which is zero. But then the infimum of
$q$ on $\mathcal F$ could not be attained, as otherwise the infimum of $(P)$
would also be attained. Indeed, if the infimum of $q$ on $\mathcal F$
is attained at $\bar{x}\in \mathcal F$, then it must also be attained
at $\bar{\xi} = (\bar{x}_1,\bar{x}_2,\bar{x}_3,\bar{x}_3^2)\in \mathcal F$ because $q(\bar{x})\geq q(\bar{\xi})$, and then the infimum of $(P)$
is attained at $(\bar{x}_1,\bar{x}_2,\bar{x}_3)$, contrary to what was shown.
\end{example}

\begin{remark}
We can write the set $\mathcal F$ as $\mathcal F = K' \times F'$, where
$K'=\{(x_1,x_2): (x_1-1)^2 + x_2^2  \leq 1\}$ is  compact convex, and where $F'$ is the Luo-Zhang set
$F' = \{(x_3,x_4): x_3^2 \leq x_4\}$, which by Theorem \ref{luo} is a {\em FW}-set.   This shows that the cross product of a convex
{\em FW}-set (which is not {\em FWM}) and a compact convex set need no longer be a {\em FW}-set.
\end{remark}

\begin{remark}
We can also write $\mathcal F = (K+L) \cap (F+M)$, where $L,M$ are linear subspaces of $\mathbb R^4$. Indeed, $K,L$ are as in Example \ref{fail},
while $F = \{(0,0,x_3,x_4): x_3^2 \leq x_4\}$ and $M =\mathbb R \times \mathbb R \times \{0\} \times \{0\}$.
Here $K+L$ is {\em FWM}, while $F+M$ is a {\em FW}-set by Theorem \ref{luo}. 
\end{remark}

\begin{remark}
Note that  $\mathcal F$ is a $qFW$-set by Proposition \ref{intersection}, see also \cite[Cor. 2]{LZ99}. 
\end{remark}
}

\section*{Acknowledgement}
Helpful discussions with B. Kummer (HU Berlin) and D. Klatte (Z\"urich) are gratefully acknowledged. 
We are indebted to Vera Roshchina (Australia) for having pointed out reference \cite{mirkil}.
J.E. Mart\'inez-Legaz was supported by the MINECO of Spain, Grant MTM2014-59179-C2-2-P, and by the Severo Ochoa Programme for Centres of Excellence
in R\&D [SEV-2015-0563]. He is affiliated with MOVE (Markets, Organizations and Votes in Economics).
D. Noll was supported by Fondation Math\'ematiques Jacques-Hadamard (FMJH) under PGMO Grant
{\em Robust Optimization for Control}.
%

%
%
%
%

\end{document}